\numberwithin{equation}{section}
\newtheorem{thm}{Theorem}[section]
\newtheorem{proposition}[thm]{Proposition}
\newtheorem{lem}[thm]{Lemma}
\newtheorem{Def}[thm]{Definition}
\theoremstyle{definition}
\newtheorem{Ass}[thm]{Assumption}
\newtheorem{rem}[thm]{Remark}
\DeclareMathOperator{\DIV}{div}
\newcommand{\R}{\mathbb{R}}
\newcommand{\N}{\mathbb{N}}
\newcommand{\p}{\partial}
\newcommand{\eps}{\varepsilon}
\newcommand{\diff}{\mathop{}\!\mathrm{d}}
\newcommand{\weaks}{\overset{\ast}{\rightharpoonup}}
\newcommand{\weak}{\rightharpoonup}
\newcommand{\doublewidetilde}[1]{{%
  \mathpalette\double@widetilde{#1}%
}}
\newcommand{\double@widetilde}[2]{%
  \sbox\z@{$\m@th#1\widetilde{#2}$}%
  \ht\z@=.9\ht\z@
  \widetilde{\box\z@}%
}
\author{Charles Elbar}
\address{{\it Charles Elbar:} Sorbonne Universit\'{e}, Laboratoire Jacques-Louis Lions (LJLL), F-75005 Paris, France}
\email{charles.elbar@sorbonne-universite.fr}
\thanks{}
\author{Jakub Skrzeczkowski}
\address{{\it Jakub Skrzeczkowski: } Faculty of Mathematics, Informatics and Mechanics, University of Warsaw, Poland}
\email{jakub.skrzeczkowski@student.uw.edu.pl}
\thanks{Jakub Skrzeczkowski was supported by National Science Center, Poland through project no. 2021/43/B/ST1/02851. Authors would like to thank Noemi David, Tomasz Dębiec and Markus Schmidtchen for a fruitful discussion on the topic of this paper.}
\begin{document}

\title[On the inviscid limit connecting Brinkman's and Darcy's models]{On the inviscid limit connecting Brinkman's and Darcy's models of tissue growth with nonlinear pressure}

\begin{abstract}
Several recent papers have addressed modelling of the tissue growth by the multi-phase models where the velocity is related to the pressure by one of the physical laws (Stoke's, Brinkman's or Darcy's). While each of these models has been extensively studied, not so much is known about the connection between them. In the recent paper (arXiv:2303.10620), assuming the linear form of the pressure, the Authors connected two multi-phase models by an inviscid limit: the viscoelastic  one (of Brinkman's type) and the inviscid one (of Darcy's type). Here, we prove that the same is true for a nonlinear, power-law pressure. The new ingredient is that we use relation between the pressure $p$ and the Brinkman potential $W$ to deduce compactness in space of $p$ from the compactness in space of~$W$.

\end{abstract}

\keywords{tissue growth, nonlocal equation, inviscid limit, nonlocal-to-local limit, Brinkman's law, Darcy's law}

\subjclass{35K45, 35K65, 35J60, 35Q92, 92C10}

\maketitle
\setcounter{tocdepth}{1}

\section{Introduction}
Last years brought deep understanding of mechanical models of tissue growth. These models are based on the continuity equation for the density $\rho$
$$
\partial_t \rho + \DIV(\rho v) = 0,
$$
where the velocity $v$ is linked to the pressure $p$ which is assumed to be a power-law function of density $\rho$ i.e. $p(\rho) = \rho^{\gamma}$ for some $\gamma \geq 1$. The most widely studied one, Darcy's law, asserts that the velocity $v = -\nabla p$. Such approach has been thoroughly studied 
\cite{david2022convergence,MR3162474,MR3292119,MR4367913,MR4215195}, also in the context of two populations \cite{MR4072681,MR4000848,MR3795211,Liuguo,MR3579573,MR4179253,MR2952633}, presence of a nutrient \cite{MR3260280,MR4324293}, more general Patlak-Keller-Segel equation \cite{he2022incompressible} or additional advection effects \cite{david2021incompressible, MR3942711}. Another approach is the
Brinkman's law \cite{MR4146915,DEBIEC2021204,Perthame-incompressible-visco, MR3037041, MR3813966}. Here, velocity equals  $v = -\nabla W$ where $W$ solves an elliptic equation
$$
-\sigma \Delta W + W = p,
$$
for some small $\sigma > 0$. In this approach, the velocity enjoys higher regularity due to elliptic regularity theory. Last but not least, one can consider the Stoke's law where the velocity is given by the corresponding Navier-Stokes(-Korteweg) equation \cite{MR3695967,MR1687274,MR3119735} or with additional surface-tension effects \cite{elbar-perthame-skrzeczkowski, degond2022multi,elbar2021degenerate,elbar2023nonlocal} by including Cahn-Hilliard-type terms.\\

Most of the studies discussed above have been carried out to in the context of so-called incompressible limit. This procedure links mechanistic models and free-boundary problems extensively used in the context of tumor growth \cite{byrne1996growth}. Mathematically, the limit corresponds to sending $\gamma \to \infty$ in the pressure relation $p(\rho) = \rho^{\gamma}$. In the limit, $\rho \leq 1$ and the zone $\Omega_t := \{x: \rho(t,x)=1\}$ is interpreted as a tumor resulting in the free boundary problem which has been extensively studied, see for instance \cite{MR4587548,MR3485146,MR3695889, MR3729490}. In this context, it is worth mentioning another form of the pressure $p(\rho) = \varepsilon \, \frac{\rho}{1-\rho}$ which enforces the density to stay below 1 so it is useful for modeling populations with congestion constraints, see \cite{MR3717913, MR4063893, MR3974475,MR3258257,MR3355504}. For such pressure laws, one can also study incompressible limit by sending $\varepsilon \to 0$. \\

In the present work, we are interested in linking the two populations model of Brinkman's type with the one of Darcy's type. Hence, we consider the system of PDEs posed on $[0,T]\times\R^d$ 
\begin{equation}\label{eq:Brinkman}
\partial_t u_{\sigma} - \DIV (u_{\sigma} \nabla W_{\sigma}) = u_{\sigma}\, F(p_{\sigma}), \qquad 
\partial_t v_{\sigma} - \DIV (v_{\sigma} \nabla W_{\sigma}) = v_{\sigma}\, G(p_{\sigma}),
\end{equation}
where $u_{\sigma}$, $v_{\sigma}$ are densities of two populations of interest, $p = (u_{\sigma}+v_{\sigma})^{\gamma}$ is the pressure, $\gamma > 1$ and $W_{\sigma}$ is the solution of the elliptic equation
\begin{equation}\label{eq:elliptic_PDE}
-\sigma \Delta W_{\sigma} + W_{\sigma} = p_{\sigma}  
\end{equation}
corresponding to the so-called Brinkman's law. Our target is to rigorously justify the limit $\sigma \to 0$ where we expect the Darcy's law $W = p = (u+v)^\gamma$ and the densities $u$, $v$ satisfy
\begin{equation}\label{eq:Darcy}
\partial_t u - \DIV (u \nabla p) = u\, F(p), \qquad 
\partial_t v - \DIV (v \nabla p) = v\, G(p).
\end{equation}
In \cite{david2023degenerate}, the case of $\gamma = 1$ was established. Here, we study the nonlinear case $\gamma > 1$. As discussed above, from the point of view of free boundary models, large $\gamma$ is more physically relevant and this motivates our studies.\\

We first list the assumptions which are standard in the theory of \eqref{eq:Brinkman}. 
\begin{Ass}\label{ass_main} We assume that: 
\begin{enumerate}[label=(\Alph*)]
    \item \label{ass:item1}The nonlinearities $F$, $G$ belong to $C^1(\R)$ and they are strictly decreasing: $F', G' \leq -\alpha < 0$ for some $\alpha > 0$. Moreover, there exists $p_H>0$ (the so-called homeostatic pressure) such that  
    $F(p_H) = G(p_H) = 0$. 
    \item The initial condition $(u^0, v^0)$ is nonnegative and satisfies the following: the upper bound $(u^0 + v^0)^{\gamma} \leq p_H$, the mass bound $\int_{\R^d} (u^0 + v^0) \diff x \leq C$ and the tail estimate $\int_{\R^d} (u^0 + v^0) |x|^2 \diff x \leq C$.
\end{enumerate}
\end{Ass}

The weak solutions to the systems \eqref{eq:Brinkman}-\eqref{eq:elliptic_PDE} and \eqref{eq:Darcy} are defined as follows:

\begin{Def}[Weak solutions to the Brinkman system]\label{def:Brinkman}
We say that $(u_{\sigma}, v_{\sigma})$ is a weak solution of~\eqref{eq:Brinkman}-\eqref{eq:elliptic_PDE} with initial condition $(u^0, v^0)$ if $u_\sigma, v_\sigma \in L^{\infty}(0,T; L^{1}(\R^d)\cap L^{\infty}(\R^d))$ and for all $\varphi \in C_{c}^{\infty}([0,T)\times \R^d)$ and $\phi \in C_{c}^{\infty}([0,T)\times \R^d)$: 
\begin{multline*}
\int_{0}^{T}\int_{\R^{d}} u_{\sigma} \, \p_{t}\varphi \diff x\diff t  + \int_{\R^{d}} \varphi(0,x) u^{0}(x) \diff x = \\ = \int_{0}^{T}\int_{\R^{d}} u_{\sigma}\nabla W_{\sigma}\cdot\nabla\varphi\diff x \diff t -  \int_{0}^{T}\int_{\R^{d}} u_{\sigma} F(p_{\sigma})\varphi \diff x\diff t,
\end{multline*}
\begin{multline*}
\int_{0}^{T}\int_{\R^{d}} v_{\sigma} \, \p_{t}\phi \diff x\diff t  + \int_{\R^{d}} \phi(0,x) v^{0}(x) \diff x = \\ =  \int_{0}^{T}\int_{\R^{d}} v_{\sigma}\nabla W_{\sigma}\cdot\nabla\phi\diff x \diff t - \int_{0}^{T}\int_{\R^{d}} v_{\sigma} G(p_{\sigma})\phi\diff x\diff t,
\end{multline*}
with $p_{\sigma}= (u_{\sigma}+ v_{\sigma})^{\gamma}$ and 
$
-\Delta W_{\sigma} + W_{\sigma} = p_{\sigma}
$
a.e. in $(0,T)\times \R^{d}$.
\end{Def}
We note that the terms $u_{\sigma}\,\nabla W_{\sigma}$, $v_{\sigma}\,\nabla W_{\sigma}$ make sense. Indeed, one can write $W_{\sigma} = K_{\sigma} \ast p_{\sigma}$ where $K_{\sigma}$ is a fundamental solution of $-\sigma \Delta K_{\sigma} + K_{\sigma} = \delta_0$. It is well-known (see, for instance, \cite[eq. (2.6)]{Perthame-incompressible-visco}) that $K_{\sigma} \geq 0$, $\int_{\R^d} K_{\sigma} \diff x = 1$ and $\nabla K_{\sigma} \in L^1(\R^d)$ where the last estimate blows up when $\sigma \to 0$. Therefore, for $\sigma$ fixed, $\nabla W_{\sigma} \in L^{\infty}(0,T; L^q(\R^d))$ for all $q \in [1,\infty]$. 
\begin{Def}[Weak solutions to the Darcy system]\label{def:Darcy}
We say that $(u, v)$ is a weak solution of~\eqref{eq:Darcy} with initial condition $(u^0, v^0)$ and $p = (u+v)^{\gamma}$ if $u, v \in L^{\infty}(0,T; L^{1}(\R^d)\cap L^{\infty}(\R^d))$, $\nabla p \in L^2((0,T)\times\R^d)$ and for all $\varphi \in C_{c}^{\infty}([0,T)\times \R^d)$ and $\phi \in C_{c}^{\infty}([0,T)\times \R^d)$: 
\begin{multline*}
\int_{0}^{T}\int_{\R^{d}} u \, \p_{t}\varphi \diff x\diff t  + \int_{\R^{d}} \varphi(0,x) u^{0}(x) \diff x = \\ = \int_{0}^{T}\int_{\R^{d}} u\nabla p\cdot\nabla\varphi\diff x \diff t -  \int_{0}^{T}\int_{\R^{d}} u F(p)\varphi\diff x\diff t,
\end{multline*}
\begin{multline*}
\int_{0}^{T}\int_{\R^{d}} v \, \p_{t}\phi \diff x\diff t  + \int_{\R^{d}} \phi(0,x) v^{0}(x) \diff x = \\ =  \int_{0}^{T}\int_{\R^{d}} v \nabla p\cdot\nabla\phi\diff x \diff t - \int_{0}^{T}\int_{\R^{d}} v G(p)\phi\diff x\diff t.
\end{multline*}
\end{Def}

The existence of weak solutions to the Brinkman system is given by the following result.
\begin{thm}\label{thm_existence_of_solutions}
Under Assumption \ref{ass_main}, there exists a weak solution $(u_{\sigma}, v_{\sigma})$ to the system~\eqref{eq:Brinkman}-\eqref{eq:elliptic_PDE} in the sense of Definition \ref{def:Brinkman}. Moreover, the solution is uniformly bounded
\begin{equation}\label{eq:uniform_bounds_existence_thm}
0\leq (u_{\sigma} + v_{\sigma})^{\gamma} \leq p_H.
\end{equation}
\end{thm}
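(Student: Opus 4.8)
The plan is to construct, for each fixed $\sigma>0$, a weak solution by a by-now-standard vanishing-viscosity scheme, with the main effort going into the uniform bounds --- especially \eqref{eq:uniform_bounds_existence_thm}, which must be obtained at the approximate level in order to survive the limit. I would mollify the initial data to $(u_\eps^0,v_\eps^0)$, add diffusion terms $\eps\Delta u$, $\eps\Delta v$ to the two equations of \eqref{eq:Brinkman}, keep the pressure $p=(u+v)^\gamma$ and the elliptic relation \eqref{eq:elliptic_PDE}, and solve the regularized system by a Schauder fixed point (freezing, say, the pressure $\bar p$ inside the drift $\nabla\bar W=\nabla(K_\sigma\ast\bar p)$ and inside the reactions, and using that elliptic regularity makes $\nabla\bar W$ bounded for fixed $\sigma$); this fixed point is run on a short time interval and then extended to $[0,T]$ using the a priori bounds below, which are time-global. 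Note that Definition \ref{def:Brinkman} asks for no control on $\nabla p_\sigma$, so no Aronson--B\'enilan-type estimate is needed at this stage.

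The a priori bounds, uniform in $\eps$, are: (i) $u_\eps,v_\eps\ge 0$, by the maximum principle for these Fokker--Planck-type equations with nonnegative data; (ii) the key bound $0\le(u_\eps+v_\eps)^\gamma\le p_H$: the sum $n_\eps=u_\eps+v_\eps$ solves $\p_t n_\eps-\eps\Delta n_\eps-\DIV(n_\eps\nabla W_\eps)=u_\eps F(n_\eps^\gamma)+v_\eps G(n_\eps^\gamma)$ with $W_\eps=K_\sigma\ast p_\eps$; at a spatial maximum of $n_\eps(t,\cdot)$ one has $\nabla n_\eps=0$ and $\Delta n_\eps\le 0$, and since $W_\eps$ is an average of $p_\eps$ one has $W_\eps\le\max_x p_\eps=p_\eps$ at that point, hence $\Delta W_\eps=\sigma^{-1}(W_\eps-p_\eps)\le 0$ there; if moreover $n_\eps^\gamma>p_H$ at that point then $F(n_\eps^\gamma)<0$ and $G(n_\eps^\gamma)<0$ because $F,G$ are strictly decreasing and vanish at $p_H$, so the whole right-hand side is $\le 0$; consequently $\max_x n_\eps(t,\cdot)$ can never rise above $p_H^{1/\gamma}$, which yields \eqref{eq:uniform_bounds_existence_thm}; (iii) the mass bound $\|n_\eps(t)\|_{L^1}\le C$, by integrating the $n_\eps$-equation and using $F(p)\le F(0)$, $G(p)\le G(0)$ together with Gronwall; (iv) the tail estimate $\int_{\R^d}n_\eps(t)|x|^2\diff x\le C$, by testing with $|x|^2$ and controlling $\int n_\eps\nabla W_\eps\cdot x$ via the mass and $L^\infty$ bounds --- this prevents escape of mass to infinity and also legitimizes the maximum principle of (ii) on $\R^d$.

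To pass to the limit, observe that (ii)--(iv) make $u_\eps,v_\eps$ bounded in $L^\infty(0,T;L^1(\R^d)\cap L^\infty(\R^d))$ with uniformly decaying tails, $\sqrt\eps\,\nabla u_\eps$ and $\sqrt\eps\,\nabla v_\eps$ bounded in $L^2$, and --- for fixed $\sigma$ --- $\nabla W_\eps$ bounded in $L^\infty(0,T;W^{1,q}(\R^d))$ for all $q<\infty$ by elliptic regularity for \eqref{eq:elliptic_PDE} with right-hand side $p_\eps\in L^\infty_t(L^1\cap L^\infty)$; from the equations, $\p_t u_\eps$ is then bounded in $L^2(0,T;H^{-1}_{\mathrm{loc}})$. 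By Aubin--Lions, $u_\eps\to u_\sigma$ and $v_\eps\to v_\sigma$ strongly in $L^p_{\mathrm{loc}}((0,T)\times\R^d)$ and a.e., and the tail bound upgrades this to global strong $L^p$ convergence; hence $p_\eps=n_\eps^\gamma\to(u_\sigma+v_\sigma)^\gamma=:p_\sigma$ a.e. and in $L^p$, so $F(p_\eps)\to F(p_\sigma)$ and $G(p_\eps)\to G(p_\sigma)$ strongly, and by elliptic regularity $\nabla W_\eps\to\nabla W_\sigma$ with $-\sigma\Delta W_\sigma+W_\sigma=p_\sigma$. Passing to the limit in the weak formulation of the regularized system --- the regularizing diffusion vanishes since $\eps\nabla u_\eps=\sqrt\eps\cdot\sqrt\eps\,\nabla u_\eps\to 0$ in $L^2$ --- gives precisely Definition \ref{def:Brinkman}, and \eqref{eq:uniform_bounds_existence_thm} is stable under the (weak-$*$) limit.

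The main obstacle is step (ii). Unlike the linear case $\gamma=1$ of \cite{david2023degenerate}, and unlike single-population models, the reactions here carry the prefactors $u_\eps$ and $v_\eps$ rather than $n_\eps$; what rescues the maximum principle is that $F$ and $G$ have the same sign on $\{n_\eps^\gamma>p_H\}$, so that $u_\eps F(n_\eps^\gamma)+v_\eps G(n_\eps^\gamma)$ is still $\le 0$ there. One also has to use the relation between $W_\eps$ and $p_\eps$ in exactly the right way --- it is the averaging property $W_\eps=K_\sigma\ast p_\eps\le\max p_\eps$, not any coercivity of the elliptic operator, that forces $\Delta W_\eps\le 0$ at a maximum of $n_\eps$ --- and to deal with the unboundedness of $\R^d$, where the second-moment bound (iv) is needed. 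A secondary, more routine point is that there is no uniform gradient bound on $p_\eps$, so the convergence in the nonlinear term $p_\eps\to p_\sigma$ must come entirely from strong compactness of $n_\eps$; since $\nabla W_\eps$ is bounded for fixed $\sigma$, the transport term is benign and the global Aubin--Lions argument (using (iv)) is standard.
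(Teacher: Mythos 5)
Your overall architecture (viscous regularization, maximum principle at the approximate level, vanishing viscosity) matches the paper, and your maximum-principle argument in step (ii) is essentially identical to the one the paper borrows from \cite{MR3037041}: at a spatial maximum of $u+v$ one has $\nabla(u+v)=0$, $\Delta(u+v)\le 0$, $W\le p$ (so $\Delta W\le 0$), and the source $u\,F(p)+v\,G(p)$ is strictly negative once $p>p_H$. That part is fine, as is the observation that the tail bound is what legitimizes a maximum-principle argument on the whole space.

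The gap is in the limit $\eps\to 0$, specifically the sentence ``By Aubin--Lions, $u_\eps\to u_\sigma$ and $v_\eps\to v_\sigma$ strongly.'' Aubin--Lions needs a uniform bound of $u_\eps$ in a space compactly embedded in $L^p_{\mathrm{loc}}$, i.e.\ some uniform \emph{spatial} regularity of $u_\eps$ itself. You have none: $L^\infty_t(L^1\cap L^\infty)$ has no compact embedding, and the only gradient bound you list, $\sqrt\eps\,\nabla u_\eps\in L^2$, degenerates exactly as $\eps\to 0$. The boundedness of $\nabla W_\eps$ in $W^{1,q}$ is regularity of the \emph{transport field}, not of the density being transported, and does not feed into Aubin--Lions. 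Since $p_\eps=(u_\eps+v_\eps)^\gamma$ is nonlinear, weak-$*$ convergence of $u_\eps,v_\eps$ is not enough to identify the limit of $p_\eps$, so this step really does require an argument. The paper handles it with a genuinely different tool: a Jabin--Belgacem-type transport-compactness lemma \cite{MR2995703}, which turns the $W^{1,2}$ bound on the velocity field $\nabla W_\eps$ (via $\Delta W_\eps=\tfrac{1}{\sigma}(p_\eps-K_\sigma\ast p_\eps)\in L^\infty$) into spatial compactness of the transported densities, with a Gronwall argument on the coupled quantities $\mathcal Q_{u_\eps}+\mathcal Q_{v_\eps}$ to cope with the two-species Lipschitz estimate $|p_\eps(x)-p_\eps(y)|\le C(|u_\eps(x)-u_\eps(y)|+|v_\eps(x)-v_\eps(y)|)$. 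That, combined with the time-compactness Lemma~\ref{lem:Jakub}, is what replaces Aubin--Lions. (The paper also interposes an intermediate mollification/truncation layer $\delta\to 0$, where Aubin--Lions \emph{is} applicable because $\eps$ is still fixed and the viscous term supplies the spatial bound; this separation is not cosmetic but is exactly what isolates the hard step $\eps\to 0$.) Without something of this type, your limit passage is not justified.
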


The existence result is  fairly standard and it is based on suitable regularizations. Nevertheless, it contains few interesting technical difficulties, therefore we present the proof in the Appendix \ref{sect:existence_solutions}. Let us point that that the uniform bound \eqref{eq:uniform_bounds_existence_thm} is the direct consequence of the maximum principle in \cite[Lemma 2.1]{MR3037041}.\\

Our main result is the rigorous justification of passing to the limit $\sigma \to 0$. 
\begin{thm}[Brinkman to Darcy]\label{thm:brinkman_to_darcy}
Let $(u_{\sigma}, v_{\sigma})$ be a weak solution of system~\eqref{eq:Brinkman}-\eqref{eq:elliptic_PDE} as in Theorem \ref{thm_existence_of_solutions}. Then, as $\sigma\to 0$, we can extract a subsequence (not relabeled) such that
\begin{align*}
& u_\sigma \to u \quad \text{weakly* in $L^{\infty}((0,T)\times\R^d )$ and weakly in $L^q((0,T)\times\R^d)$ for $1\leq q<\infty$},\\
& v_\sigma \to v \quad \text{weakly* in $L^{\infty}((0,T)\times\R^d )$ and weakly in $L^q((0,T)\times\R^d)$ for $1\leq q<\infty$}, \\
& p_\sigma \to p \quad \text{strongly in  $L^{q}((0,T)\times \R^{d})$ for all $1\le q < +\infty$},\\
& W_\sigma \to p \quad \text{ strongly in $L^{2}(0,T; H^{1}(\R^{d}))\cap L^{q}((0,T)\times \R^{d})$ for all $1 < q< +\infty$},
\end{align*}
with $p=(u+v)^{\gamma}$. Moreover $(u,v)$ is a weak solution of \eqref{eq:Darcy} as in Definition \ref{def:Darcy}.
\end{thm}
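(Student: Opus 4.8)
The plan is to follow the standard compactness strategy for such singular limits, with the key novelty being how to upgrade the weak convergence of $p_\sigma$ to strong convergence. First I would collect the uniform-in-$\sigma$ estimates. From Theorem \ref{thm_existence_of_solutions} we have $0 \le (u_\sigma + v_\sigma)^\gamma \le p_H$, so $u_\sigma, v_\sigma$ are uniformly bounded in $L^\infty((0,T)\times\R^d)$, and hence so is $p_\sigma$. The mass and second-moment bounds from Assumption \ref{ass_main}, propagated in time (using the growth terms $F, G$ and the nonnegativity of the pressure below $p_H$), give tightness: no mass escapes to spatial infinity. The crucial a priori estimate is the energy/entropy estimate obtained by testing the equation for $n_\sigma := u_\sigma + v_\sigma$ (or a renormalized version) against $p_\sigma$; schematically, $\partial_t n_\sigma - \DIV(n_\sigma \nabla W_\sigma) = n_\sigma \, \widetilde F(p_\sigma)$ where $\widetilde F$ is a convex combination of $F, G$, and testing against $W_\sigma$ together with the elliptic relation $-\sigma\Delta W_\sigma + W_\sigma = p_\sigma$ yields a bound on $\sigma^{1/2}\,\nabla W_\sigma$ in $L^2$ and, more importantly, a uniform bound on $\nabla W_\sigma$ in $L^2((0,T)\times\R^d)$ (this is where the structure $F', G' \le -\alpha$ is used, producing a damping term $\alpha \int n_\sigma p_\sigma$). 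Since $W_\sigma - p_\sigma = \sigma \Delta W_\sigma \to 0$ in a suitable negative-order space, one also controls $\nabla p_\sigma$ weakly.

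Next I would extract weakly convergent subsequences: $u_\sigma \weak u$, $v_\sigma \weak v$ weakly-$*$ in $L^\infty$ and weakly in every $L^q$, $1 \le q < \infty$; $\nabla W_\sigma \weak \nabla p$ weakly in $L^2$; and from $\|W_\sigma - p_\sigma\|$ small, $W_\sigma$ and $p_\sigma$ share the same weak limit, which I must still identify with $(u+v)^\gamma$. For the latter I need \emph{strong} convergence of $p_\sigma$ (equivalently of $n_\sigma$). Here I would use the Aubin--Lions--Simon machinery: spatial compactness plus a uniform bound on $\partial_t$ of something. Time regularity comes directly from the PDE: $\partial_t u_\sigma$ and $\partial_t v_\sigma$ are bounded in, say, $L^2(0,T; H^{-1}(\R^d)) + L^1$ (the flux $n_\sigma \nabla W_\sigma$ is bounded in $L^2$ and the reaction term in $L^\infty$). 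For \textbf{spatial} compactness — and this is the main obstacle and the point the abstract advertises — I cannot directly get compactness in space of $n_\sigma$ or $p_\sigma$ from the degenerate-parabolic structure as in the $\gamma = 1$ case. Instead I would first prove spatial compactness of $W_\sigma$: from $-\sigma\Delta W_\sigma + W_\sigma = p_\sigma$ with $p_\sigma$ bounded in $L^\infty$ and $\nabla W_\sigma$ bounded in $L^2$, one gets (for fixed small $\sigma$, but the key is uniformity) that $W_\sigma$ is bounded in $L^2(0,T; H^1)$, giving local strong $L^2$ compactness in space of $W_\sigma$; then I would invoke the relation between $p$ and $W$, namely that on the set where things are nondegenerate $p_\sigma = W_\sigma - \sigma \Delta W_\sigma$ and, using the sign/monotonicity and the elliptic comparison $\|p_\sigma - W_\sigma\|_{L^2}^2 = \sigma \int \nabla W_\sigma \cdot \nabla p_\sigma \le \sigma \|\nabla W_\sigma\|_{L^2}\|\nabla p_\sigma\|_{L^2}$, deduce that translations of $p_\sigma$ are controlled by translations of $W_\sigma$ plus a vanishing error, hence $p_\sigma$ inherits spatial compactness from $W_\sigma$. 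Combining spatial compactness of $p_\sigma$ (equivalently $n_\sigma$) with the time-derivative bound and an Aubin--Lions--Simon argument (in a local-in-space version, then upgraded via tightness) yields $n_\sigma \to n$ strongly in $L^q_{loc}$, and by the uniform $L^\infty$ bound and tightness, strongly in $L^q((0,T)\times\R^d)$ for all finite $q$; continuity of $s \mapsto s^\gamma$ then gives $p_\sigma \to n^\gamma =: p$ strongly, so $p = (u+v)^\gamma$.

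Finally I would pass to the limit in the weak formulation. The reaction terms $u_\sigma F(p_\sigma) \to u F(p)$ and $v_\sigma G(p_\sigma) \to v G(p)$ in, e.g., $L^1_{loc}$ by the strong convergence of $p_\sigma$, continuity of $F, G$, and weak-$*$ convergence of $u_\sigma, v_\sigma$ against the (now strongly convergent, bounded) test-function-times-$F(p_\sigma)$. The flux terms $u_\sigma \nabla W_\sigma \cdot \nabla\varphi$ are the delicate product: $u_\sigma \weak u$ only weakly, but I can write $u_\sigma \nabla W_\sigma = u_\sigma \nabla p_\sigma + u_\sigma \nabla(W_\sigma - p_\sigma)$, and handle $u_\sigma \nabla p_\sigma$ by noting $n_\sigma \nabla p_\sigma = \nabla \Pi(n_\sigma)$ for the appropriate primitive $\Pi$ (with $\Pi' (s) = \gamma s^\gamma$, up to constants), so strong convergence of $n_\sigma$ gives convergence of $\Pi(n_\sigma)$ and hence of $\nabla\Pi(n_\sigma)$ in the sense of distributions, while the error term $u_\sigma \nabla(W_\sigma-p_\sigma)$ is bounded by $\|\nabla(W_\sigma - p_\sigma)\|_{L^2} \to 0$. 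Splitting $u_\sigma \nabla p_\sigma = \frac{u_\sigma}{n_\sigma}\,\nabla\Pi(n_\sigma)$ on $\{n_\sigma > 0\}$ requires a small amount of care (the ratio $u_\sigma/n_\sigma$ converges weakly-$*$ to some $\theta \in [0,1]$ and one checks $u = \theta n$ using the strong convergence of $n_\sigma$); alternatively one works directly with the equation for $n_\sigma$ to get $\nabla p \in L^2$ and then argues for $u, v$ separately. The strong convergence $W_\sigma \to p$ in $L^2(0,T;H^1)$ claimed in the statement follows by combining $\|W_\sigma - p_\sigma\|_{L^2} \to 0$, $\sigma^{1/2}\|\nabla W_\sigma\|_{L^2}$ bounded giving $\|\nabla(W_\sigma - p_\sigma)\|_{L^2}\to 0$, and the strong $L^q$ convergence of $p_\sigma$, plus the weak $L^2$ convergence of $\nabla p_\sigma$ promoted to strong via the energy identity. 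I expect the genuinely new and hardest step to be the transfer of spatial compactness from $W_\sigma$ to $p_\sigma$ uniformly in $\sigma$ — controlling the commutator between the singular elliptic operator and spatial translations — which is exactly the ingredient highlighted in the abstract.
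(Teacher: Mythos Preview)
Your overall architecture is sound and matches the paper: derive uniform bounds from an energy identity (the paper tests the sum equation with $\gamma(u_\sigma+v_\sigma)^{\gamma-1}$, not with $W_\sigma$, obtaining $\|\nabla W_\sigma\|_{L^2}$ and $\sqrt{\sigma}\,\|\Delta W_\sigma\|_{L^2}$ bounded), transfer spatial equicontinuity from $W_\sigma$ to $p_\sigma$ via $p_\sigma=W_\sigma-\sigma\Delta W_\sigma$, combine with a bound on $\partial_t p_\sigma$ in a negative Sobolev space, and run an Aubin--Lions--Simon argument. Where your proposal breaks down is the passage to the limit in the individual fluxes $u_\sigma\nabla W_\sigma$, $v_\sigma\nabla W_\sigma$.

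You split $u_\sigma\nabla W_\sigma=u_\sigma\nabla p_\sigma+u_\sigma\nabla(W_\sigma-p_\sigma)$ and assert $\|\nabla(W_\sigma-p_\sigma)\|_{L^2}\to 0$. But $W_\sigma-p_\sigma=\sigma\Delta W_\sigma$, so $\nabla(W_\sigma-p_\sigma)=\sigma\nabla\Delta W_\sigma$ involves \emph{third} derivatives of $W_\sigma$, for which no uniform estimate is available; the energy only gives $\sqrt{\sigma}\,\Delta W_\sigma\in L^2$, which yields $\|W_\sigma-p_\sigma\|_{L^2}\to 0$ but says nothing about its gradient. For the same reason there is no uniform $L^2$ bound on $\nabla p_\sigma=\nabla W_\sigma-\sigma\nabla\Delta W_\sigma$, so every place you invoke weak convergence of $\nabla p_\sigma$ is unjustified. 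Your fallback $u_\sigma\nabla p_\sigma=(u_\sigma/n_\sigma)\,\nabla\Pi(n_\sigma)$ then pairs a merely weak-$*$ convergent $L^\infty$ factor with something converging only in $\mathcal{D}'$ (you have no $L^2$ bound on $\nabla\Pi(n_\sigma)=n_\sigma\nabla p_\sigma$ either), and there is no div--curl structure to rescue the product. The paper avoids all of this by proving $\nabla W_\sigma\to\nabla p$ \emph{strongly} in $L^2$ via convergence of norms: weak convergence gives the $\liminf$ inequality; for the $\limsup$, one first passes to the limit in the equation for $n_\sigma$ (there $n_\sigma\nabla W_\sigma$ is a genuine strong--weak product since $n_\sigma$ is strongly compact), tests the limit equation with $\gamma(u+v)^{\gamma-1}$ to obtain an energy identity for $\|\nabla p\|_{L^2}^2$, and compares it with the $\sigma$-level identity using weak lower semicontinuity of $\int p_\sigma(T,x)\,\mathrm{d}x$. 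Once $\nabla W_\sigma\to\nabla p$ strongly, $u_\sigma\nabla W_\sigma\to u\nabla p$ is an immediate weak--strong product. You mention this device in your last sentence (``promoted to strong via the energy identity'') but apply it to the wrong object; making it the centerpiece, applied to $\nabla W_\sigma$, is precisely what closes the argument.
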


Let us briefly outline the strategy. The main difficulty is to pass to the limit in the terms $u_{\sigma}\, \nabla W_{\sigma}$ and $v_{\sigma}\, \nabla W_{\sigma}$. It seems that there is no hope for the strong compactness of $u_{\sigma}$ and $v_{\sigma}$ because this requires at least uniform bounds on $\{D^2 W_{\sigma}\}$ (cf. \cite{MR2995703}) which does not seem to be available (see \eqref{eq:mass_pressure} for the energy identity). Therefore, we plan to prove strong compactness of $\{\nabla W_{\sigma}\}$ in $L^2((0,T)\times\R^d)$ by proving weak compactness and convergences of norms:
$$
\nabla W_{\sigma} \weak \nabla p \mbox{ in } L^2((0,T)\times \R^d), \qquad \lim_{\sigma \to 0} \|\nabla W_{\sigma}\|^2_{L^2((0,T)\times\R^d)} = \|\nabla p\|^2_{L^2((0,T)\times \R^d)},
$$
which was recently applied in several problems of similar nature \cite{jacobs2021existence, david-phenotypic, Liuguo, MR4179253}. This can be achieved if one proves strong compactness of $\{p_{\sigma}\}$. Indeed, weak compactness of $\{\nabla W_{\sigma}\}$ is then a consequence of the energy estimate \eqref{eq:mass_pressure} and the elliptic equation \eqref{eq:elliptic_PDE}. The convergence of norms follows from the energy: we compare energy \eqref{eq:mass_pressure} for $\sigma \to 0$ with the energy for 
$$
\partial_t (u+v) - \DIV((u+v) \, \nabla p) = u\,F(p) + v\,G(p),
$$
which can be written because $\{p_{\sigma}\}$ is strongly compact.\\

It remains to explain how we obtain strong compactness of the pressures $\{p_{\sigma}\}$. From a priori estimates (Proposition \ref{prop:unif_bounds}) we know that $\{p_{\sigma}\}$ is compact in time while $\{W_{\sigma}\}$ is compact in space. Moreover, the term $\sigma \Delta W_{\sigma}$ converges strongly to 0. Therefore, we can use the elliptic equation \eqref{eq:elliptic_PDE} to translate information about compactness of $\{W_{\sigma}\}$ into compactness of $\{p_{\sigma}\}$. Details are given in Lemma \ref{lem:Jakub} and Lemma \ref{lem:compact_pressure}.\\

We also remark that our method covers the linear case $\gamma= 1$ studied in \cite{david2023degenerate}. The only difference is that the energy identity used to obtain all the estimates and deduce strong compactness of $\{\nabla W_{\sigma}\}$ is deduced by multiplying equation for the sum $u_{\sigma}+v_{\sigma}$ with $\log(u_{\sigma}+v_{\sigma})$. Some care is necessary as this function may not be admissible in the vacuum where $u_{\sigma}+v_{\sigma} = 0$ and the details are discussed in \cite{david2023degenerate}.\\

Finally, let us remark that the problem of passing to the limit from \eqref{eq:Brinkman} to \eqref{eq:Darcy} can be seen in a much broader context of passing to the limit from the nonlocal equation to the local one. More precisely, \eqref{eq:elliptic_PDE} can be written as $W_{\sigma} = K_{\sigma} \ast p_{\sigma}$ where $K_{\sigma}$ is a kernel approaching Dirac mass $\delta_0$ so that $W = p$ in the limit $\sigma \to 0$. Such problems are intesively studied for several PDEs, including porous media equation \cite{carrillo2023nonlocal, burger2022porous, MR1821479,Hecht2023porous,MR3913840}, Cahn-Hilliard equation (both nondegenerate \cite{MR4198717,
MR4093616} and degenerate \cite{elbar-skrzeczkowski,carrillo2023degenerate,elbar2023limit}) and hyperbolic conservation laws \cite{coclite2023ole}.

\section{A priori estimates}

Here, we prove the following:
\begin{proposition}\label{prop:unif_bounds}
Let $\sigma > 0$. Let $(u_{\sigma}, v_{\sigma})$ be a weak solution of \eqref{eq:Brinkman}--\eqref{eq:elliptic_PDE}. Then,  the following sequences are uniformly bounded with respect to $\sigma \in (0,1)$:
\begin{enumerate}[label=(\Alph*)]
    \item $\{p_{\sigma}\}$ and $\{W_{\sigma}\}$ in $L^{\infty}(0,T; L^1(\R^d) \cap L^{\infty}(\R^d))$\label{estim:A},
    \item \label{estim:B} $\{\nabla W_{\sigma}\}$ in $L^2((0,T)\times\R^d)$,
    \item $\{\sqrt{\sigma}\,\Delta W_{\sigma}\}$ in $L^2((0,T)\times\R^d)$ \label{estim:C},
    \item $\{\partial_t p_{\sigma}\}$ in $L^{1}(0,T; H^{-s}_{loc}(\R^{d}))$, \, for $s$ large enough, \label{estim:D} 
    \item $\{p_{\sigma}\, |x|^{2}\}$ in $L^{\infty}(0,T; L^{1}(\R^{d}))$, \label{estim:E}
\end{enumerate}
\end{proposition}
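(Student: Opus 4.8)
The plan is to reduce everything to a single energy identity for the total density. Adding the two equations in \eqref{eq:Brinkman} and setting $\rho_\sigma := u_\sigma + v_\sigma$, $R_\sigma := u_\sigma F(p_\sigma) + v_\sigma G(p_\sigma)$, $p_\sigma = \rho_\sigma^\gamma$, we get
\[
\partial_t \rho_\sigma - \DIV(\rho_\sigma\nabla W_\sigma) = R_\sigma .
\]
I would multiply this by $\tfrac{\gamma}{\gamma-1}\rho_\sigma^{\gamma-1}$ and integrate over $\R^d$. Since $\rho_\sigma\,\tfrac{\diff}{\diff\rho}\!\big(\tfrac{\gamma}{\gamma-1}\rho^{\gamma-1}\big)\nabla\rho_\sigma = \gamma\rho_\sigma^{\gamma-1}\nabla\rho_\sigma = \nabla p_\sigma$, the transport term turns into $\int_{\R^d}\nabla W_\sigma\cdot\nabla p_\sigma$; inserting $\nabla p_\sigma = \nabla W_\sigma - \sigma\nabla\Delta W_\sigma$ from \eqref{eq:elliptic_PDE} and integrating by parts once more gives $\int_{\R^d}|\nabla W_\sigma|^2 + \sigma\int_{\R^d}|\Delta W_\sigma|^2$. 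This yields the energy identity
\[
\frac{\diff}{\diff t}\int_{\R^d}\frac{\rho_\sigma^\gamma}{\gamma-1}\diff x + \int_{\R^d}|\nabla W_\sigma|^2\diff x + \sigma\int_{\R^d}|\Delta W_\sigma|^2\diff x = \frac{\gamma}{\gamma-1}\int_{\R^d}R_\sigma\,\rho_\sigma^{\gamma-1}\diff x .
\]
As the multiplier $\rho_\sigma^{\gamma-1}$ is singular at the vacuum when $\gamma<2$, I would perform this computation on the regularized solutions of Appendix \ref{sect:existence_solutions} (smooth densities, truncated nonlinearities) and pass to the limit, or equivalently regularize the multiplier.

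Next I would read off the estimates. By Assumption \ref{ass_main}\ref{ass:item1} and the bound $0\le\rho_\sigma\le p_H^{1/\gamma}$ from Theorem \ref{thm_existence_of_solutions} one has $0\le R_\sigma\le C\rho_\sigma$, whence $\tfrac{\diff}{\diff t}\int_{\R^d}\rho_\sigma\le C\int_{\R^d}\rho_\sigma$ and Gr\"onwall's lemma give $\|\rho_\sigma\|_{L^\infty(0,T;L^1(\R^d))}\le C$. Since $p_\sigma = \rho_\sigma^\gamma\le p_H^{(\gamma-1)/\gamma}\rho_\sigma$ and $p_\sigma\le p_H$, this is estimate \ref{estim:A} for $p_\sigma$; writing $W_\sigma = K_\sigma\ast p_\sigma$ with $K_\sigma\ge0$, $\|K_\sigma\|_{L^1}=1$ gives $0\le W_\sigma\le\|p_\sigma\|_{L^\infty}$ and $\|W_\sigma\|_{L^1}\le\|p_\sigma\|_{L^1}$, so \ref{estim:A} for $W_\sigma$ too. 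The right side of the energy identity is then bounded by $C\int_{\R^d}\rho_\sigma^\gamma\le C\|\rho_\sigma\|_{L^\infty_tL^1_x}\le C$ uniformly in $\sigma$, so integrating in $t$ gives estimates \ref{estim:B} and \ref{estim:C} simultaneously.

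For the time regularity \ref{estim:D}, I would first derive the pressure equation: from $\partial_t p_\sigma = \gamma\rho_\sigma^{\gamma-1}\partial_t\rho_\sigma$ and the algebra above,
\[
\partial_t p_\sigma = \DIV(p_\sigma\nabla W_\sigma) + (\gamma-1)\,p_\sigma\,\Delta W_\sigma + \gamma\,\rho_\sigma^{\gamma-1}R_\sigma .
\]
Testing against $\psi\in C_c^\infty(\R^d)$, the first and third terms are controlled by $\|p_\sigma\|_{L^\infty}\|\nabla W_\sigma\|_{L^2}$ (integrable in $t$ by \ref{estim:B}) and by $\|\rho_\sigma\|_{L^\infty}\|R_\sigma\|_{L^\infty}|\supp\psi|$. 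The dangerous term is $\int p_\sigma\Delta W_\sigma\psi$: a naive estimate costs a factor $\sigma^{-1/2}$, so instead I would integrate by parts, use $\nabla p_\sigma = \nabla W_\sigma - \sigma\nabla\Delta W_\sigma$ once more, and integrate by parts again; the only $\sigma$-dependent contributions are then $\sigma\int|\Delta W_\sigma|^2\psi$ (bounded by \ref{estim:C}) and $\sigma\int\Delta W_\sigma\,\nabla W_\sigma\cdot\nabla\psi$, which by Cauchy--Schwarz is $\le\sqrt\sigma\,\|\sqrt\sigma\Delta W_\sigma\|_{L^2}\|\nabla W_\sigma\|_{L^2}\|\nabla\psi\|_{L^\infty}\to 0$. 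Hence $\int_0^T|\langle\partial_t p_\sigma,\psi\rangle|\diff t\le C\|\psi\|_{H^s}$ for any $s>\tfrac d2+1$, which is \ref{estim:D}. Finally, for \ref{estim:E}, since $p_\sigma\le p_H^{(\gamma-1)/\gamma}\rho_\sigma$ it suffices to bound $\int_{\R^d}\rho_\sigma|x|^2$; differentiating and using the $\rho_\sigma$-equation,
\[
\frac{\diff}{\diff t}\int_{\R^d}\rho_\sigma|x|^2\diff x = -2\int_{\R^d}\rho_\sigma\, x\cdot\nabla W_\sigma\diff x + \int_{\R^d}R_\sigma|x|^2\diff x ,
\]
and bounding $\big|2\int\rho_\sigma x\cdot\nabla W_\sigma\big|\le 2\big(\int\rho_\sigma|x|^2\big)^{1/2}\|\rho_\sigma\|_{L^\infty}^{1/2}\|\nabla W_\sigma\|_{L^2}$ together with $\int R_\sigma|x|^2\le C\int\rho_\sigma|x|^2$, Gr\"onwall's lemma and $\int_0^T\|\nabla W_\sigma\|_{L^2}\diff t\le\sqrt T\,\|\nabla W_\sigma\|_{L^2((0,T)\times\R^d)}<\infty$ close the estimate; the manipulations with the unbounded weight $|x|^2$ and with $W_\sigma=K_\sigma\ast p_\sigma$ require the standard justification via a cutoff $\chi_R$ and the tail control of the mass.

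I expect the pressure-time-regularity bound \ref{estim:D} to be the main obstacle: the product $p_\sigma\Delta W_\sigma$ is \emph{not} uniformly bounded in any natural space (only $\sqrt\sigma\,\Delta W_\sigma$ is in $L^2$), so one must use the elliptic relation \eqref{eq:elliptic_PDE} a second time \emph{inside} the duality pairing to see that the only uncontrolled-looking piece is genuinely $O(\sqrt\sigma)$. A secondary technical point is making the energy identity rigorous, which should be carried out on the approximate problem of Appendix \ref{sect:existence_solutions} rather than directly on weak solutions.
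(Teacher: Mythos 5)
Your proposal is correct and follows essentially the same route as the paper: the energy identity obtained by multiplying the equation for $u_\sigma+v_\sigma$ by (a multiple of) $(u_\sigma+v_\sigma)^{\gamma-1}$, combined with the elliptic relation $\nabla p_\sigma=\nabla W_\sigma-\sigma\nabla\Delta W_\sigma$, yields \ref{estim:A}--\ref{estim:C}; the pressure equation and a double integration by parts against test functions handle \ref{estim:D}; and the second-moment computation with Cauchy--Schwarz and Grönwall gives \ref{estim:E}. The only differences from the paper are cosmetic (you use the multiplier $\tfrac{\gamma}{\gamma-1}\rho_\sigma^{\gamma-1}$ rather than $\gamma\rho_\sigma^{\gamma-1}$, derive the $L^1$ bound from the conservation law rather than from the energy identity, and compute the second moment for the sum $u_\sigma+v_\sigma$ rather than for each species), and you are right that the delicate point in \ref{estim:D} is that the only $\sigma$-dependent remainders are genuinely $O(\sigma)$ or controlled by $\|\sqrt\sigma\,\Delta W_\sigma\|_{L^2}^2$.
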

\begin{proof}
First, the $L^{\infty}$ estimate for $p_{\sigma}$ is a direct consequence of \eqref{eq:uniform_bounds_existence_thm} and the same is true for $W_{\sigma} = K_{\sigma} \ast p_{\sigma}$ because $\int_{\R^d} K_{\sigma} \diff x = 1$. The estimates~\ref{estim:A}, \ref{estim:B} and~\ref{estim:C} are a consequence of energy considerations. The equation for the sum $u_{\sigma}+v_{\sigma}$ reads:
\begin{equation}\label{eq:sum}
\partial_t(u_{\sigma}+v_{\sigma}) - \DIV((u_{\sigma}+v_{\sigma})\nabla W_{\sigma}) = u_{\sigma}\, F(p_{\sigma}) + v_{\sigma}\,G(p_{\sigma}).
\end{equation}
We multiply with $\gamma\, (u_{\sigma}+v_{\sigma})^{\gamma-1}$ and integrate in space so that
\begin{multline*}
\partial_t \int_{\R^d} p_{\sigma} \diff x + \gamma\, ( \gamma-1)\int_{\R^d}  (u_{\sigma}+v_{\sigma})^{\gamma-1}\, \nabla (u_\sigma + v_\sigma) \cdot \nabla W_{\sigma}\diff x = \\ = \gamma \int_{\R^d} (u_{\sigma}\,F(p_{\sigma}) + v_{\sigma}\, G(p_{\sigma})) (u_{\sigma}+v_{\sigma})^{\gamma-1} \diff x,
\end{multline*}
which, with~\eqref{eq:elliptic_PDE}, can be rewritten as
\begin{equation}\label{eq:mass_pressure}
\begin{split}
\partial_t \int_{\R^d} p_{\sigma} \diff x &+  ( \gamma-1)\int_{\R^d}  |\nabla W_{\sigma}|^2 + \sigma |\Delta W_{\sigma}|^2 \diff x \\&= \gamma \int_{\R^d} (u_{\sigma}\,F(p_{\sigma}) + v_{\sigma}\,G(p_{\sigma})) (u_{\sigma}+v_{\sigma})^{\gamma-1} \diff x \leq C \, \int_{\R^d} p_{\sigma} \diff x,
\end{split}
\end{equation}
because $p_{\sigma}$ is uniformly bounded and $F$, $G$ are continuous. Finally, we have  $\|W_{\sigma}\|_{L^1(\R^d)} = \|K_{\sigma}\ast p_{\sigma}\|_{L^1(\R^d)} = \|p_{\sigma}\|_{L^1(\R^d)}$ which concludes the proof of~\ref{estim:A}, \ref{estim:B} and~\ref{estim:C}. \\
Next, we establish the bound~\ref{estim:D} on $\partial_t p_{\sigma}$. We first write the equation on $p$ which can be obtained after multiplying~\eqref{eq:sum} by $\gamma (u_\sigma + v_\sigma)^{\gamma-1} $:
\begin{equation*}
\p_{t} p_{\sigma} =  \nabla p_{\sigma}\nabla W_{\sigma} + \gamma\, p_{\sigma}\, \Delta W_{\sigma} +  \gamma\left(u_{\sigma} F(p_\sigma) + v_\sigma G(p_\sigma) \right)(u_\sigma + v_\sigma )^{\gamma-1}. 
\end{equation*}
First note that the last term on the right-hand side is bounded in $L^{\infty}((0,T)\times \R^d)$ by assumptions on $F$ and $G$. In order to obtain a bound on $\p_t p_{\sigma}$ in a negative Sobolev spaces, we see that, up to integration by parts, it remains to study the term $p_{\sigma}\Delta W_{\sigma}$. Let $\varphi$ be a smooth, compactly supported test function. Then, by definition of $W_{\sigma}$ 
\begin{equation*}
\int_{\R^{d}} p_{\sigma} \Delta W_{\sigma} \varphi \diff x = -\sigma\int_{\R^{d}} |\Delta W_{\sigma}|^2 \varphi\diff x - \int_{\R^d} |\nabla W_{\sigma}|^{2}\varphi \diff x - \int_{\R^{d}} W_{\sigma} \nabla W_{\sigma}\cdot \nabla \varphi \diff x.
\end{equation*}

The proof of~\ref{estim:D} is concluded using \ref{estim:A}, \ref{estim:B} and \ref{estim:C}. Now, we prove~\ref{estim:E}. Since $p_{\sigma}$ is bounded in $L^{\infty}$ we only need to prove that $u_\sigma$ and $v_\sigma$ have uniformly bounded second moments. We compute it for $u_\sigma$ and the proof is similar for $v_\sigma$
$$
\p_t \int_{\R^d} |x|^{2} u_{\sigma} \diff x + 2\int_{\R^d}u_{\sigma}\nabla W_{\sigma}\cdot x\diff x = \int_{\R^d}|x|^{2} u_{\sigma} F(p_{\sigma}) \diff x. 
$$
Integrating in time, using Cauchy-Schwartz inequality, estimate \ref{estim:B}, assumptions on $F$ and Gronwall's inequality, we obtain the result.
\end{proof}

\section{Strong compactness of the pressure}


\begin{proposition}\label{prop:convergence}
There exist functions $u$, $v$ and $p$ such that $p= (u+v)^{\gamma}$ a.e. and such that up to a subsequence (not relabelled) for all $1 \leq q < \infty$:
\begin{align}
& u_\sigma \to u \quad \text{weakly* in $L^{\infty}((0,T)\times\R^d )$ and weakly in $L^{q}((0,T)\times\R^d)$}  ,\label{weak_u}\\
& v_\sigma \to v \quad \text{weakly* in $L^{\infty}((0,T)\times\R^d )$ and weakly in $L^{q}((0,T)\times\R^d)$}, \label{weak_v}\\
& \sigma \Delta W_\sigma \to 0 \quad \text{strongly in $L^{2}((0,T)\times \R^d)$},\label{weak_laplace}\\
& p_\sigma \to p \quad \text{strongly in  $L^{q}((0,T)\times \R^{d})$},\label{strong_p}\\
& W_\sigma \to p \quad \text{weakly in $L^{2}(0,T; H^{1}(\R^{d}))$, strongly in $L^{q}((0,T)\times \R^{d})$ for $q>1$},\label{strong_W}\\
& u_\sigma + v_\sigma \to u+v \quad \text{strongly in $L^{q}((0,T)\times \R^{d})$}\label{strong_sum},\\
& (u_\sigma + v_\sigma)(T) \to (u+v)(T) \quad \text{weakly in } L^{\gamma}(\R^d). \label{eq:weak_conv_trace}
\end{align}
\end{proposition}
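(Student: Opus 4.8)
The plan is to get the weak limits \eqref{weak_u}--\eqref{weak_v} essentially for free from the $L^1\cap L^\infty$ bounds, to reduce all the remaining assertions to the strong $L^2$ compactness of the pressures $\{p_\sigma\}$, and to establish this compactness by transporting the spatial compactness of $\{W_\sigma\}$ --- the only quantity carrying a uniform spatial derivative bound --- through the elliptic equation \eqref{eq:elliptic_PDE}. For the weak limits: by \eqref{eq:uniform_bounds_existence_thm} one has $0\le u_\sigma,v_\sigma\le u_\sigma+v_\sigma\le p_H^{1/\gamma}$, so together with the uniform mass bound (a Gronwall estimate on $\int_{\R^d}(u_\sigma+v_\sigma)$ using \eqref{eq:uniform_bounds_existence_thm} and Assumption~\ref{ass_main}) and \ref{estim:A}, \ref{estim:E} the families $\{u_\sigma\}$, $\{v_\sigma\}$ are bounded in $L^\infty(0,T;L^1(\R^d)\cap L^\infty(\R^d))$, hence in $L^q((0,T)\times\R^d)$ for all $q\in[1,\infty]$, and tight. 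Extracting a subsequence yields $u_\sigma\wstar u$, $v_\sigma\wstar v$ weakly-$*$ in $L^\infty$ and, by reflexivity when $q>1$ and by Dunford--Pettis (using the $L^\infty$ bound and tightness) when $q=1$, also weakly in $L^q$ for all $1\le q<\infty$, i.e. \eqref{weak_u}--\eqref{weak_v}. The convergence \eqref{weak_laplace} is immediate from \ref{estim:C}, since $\|\sigma\Delta W_\sigma\|_{L^2((0,T)\times\R^d)}=\sqrt{\sigma}\,\|\sqrt{\sigma}\,\Delta W_\sigma\|_{L^2((0,T)\times\R^d)}\le C\sqrt{\sigma}$.

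The heart of the matter is the strong $L^2$ compactness of $\{p_\sigma\}$. On one hand, \ref{estim:B} controls spatial translations of $W_\sigma$ uniformly, $\|W_\sigma(\cdot+h)-W_\sigma\|_{L^2((0,T)\times\R^d)}\le|h|\,\|\nabla W_\sigma\|_{L^2((0,T)\times\R^d)}\le C|h|$; since $p_\sigma=W_\sigma-\sigma\Delta W_\sigma$ by \eqref{eq:elliptic_PDE} and $\sigma\Delta W_\sigma\to0$ in $L^2$ by \eqref{weak_laplace}, the same bound holds for $p_\sigma$ up to an additive error $2\|\sigma\Delta W_\sigma\|_{L^2}\le C\sqrt{\sigma}$ that vanishes as $\sigma\to0$, i.e. $\{p_\sigma\}$ is asymptotically equicontinuous in space along the limit $\sigma\to0$. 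On the other hand, \ref{estim:D} gives $\{\partial_t p_\sigma\}$ bounded in $L^1(0,T;H^{-s}_{loc}(\R^d))$, i.e. equicontinuity in time with values in $H^{-s}_{loc}$, and \ref{estim:A}, \ref{estim:E} provide a uniform $L^2$ bound and tightness. Combining spatial equicontinuity, weak time-equicontinuity and tightness through an Aubin--Lions--Simon type argument (mollify in the space variable, use the compact embedding $H^1_{loc}\hookrightarrow\hookrightarrow L^2_{loc}\hookrightarrow H^{-s}_{loc}$ on an exhausting family of balls, extract diagonally and close up with the tightness estimate) --- this is precisely the content of the compactness lemmas announced in the introduction, Lemma~\ref{lem:Jakub} and Lemma~\ref{lem:compact_pressure} --- gives that $\{p_\sigma\}$ is precompact in $L^2((0,T)\times\R^d)$. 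After extraction, $p_\sigma\to p$ strongly in $L^2$, and then $W_\sigma=p_\sigma+\sigma\Delta W_\sigma\to p$ strongly in $L^2$ as well.

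From here the rest is routine. Along a further subsequence $p_\sigma\to p$ a.e.; with \eqref{eq:uniform_bounds_existence_thm}, dominated convergence and tightness this upgrades to strong convergence in every $L^q((0,T)\times\R^d)$, $1\le q<\infty$, giving \eqref{strong_p}. Since $u_\sigma+v_\sigma=p_\sigma^{1/\gamma}$ and $s\mapsto s^{1/\gamma}$ is continuous and bounded on $[0,p_H]$, one gets $u_\sigma+v_\sigma\to p^{1/\gamma}$ a.e. and in every $L^q$, i.e. \eqref{strong_sum}; comparison with the weak limit of the first paragraph forces $u+v=p^{1/\gamma}$, that is $p=(u+v)^\gamma$ a.e. For \eqref{strong_W}, $W_\sigma\to p$ strongly in $L^2$ hence a.e., and, using $\|W_\sigma\|_{L^\infty}\le p_H$, also in every $L^q$ with $q\ge1$; the bound \ref{estim:B} then forces $\nabla W_\sigma\weak\nabla p$ in $L^2$, so $p\in L^2(0,T;H^1(\R^d))$ and $W_\sigma\weak p$ in $L^2(0,T;H^1(\R^d))$. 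Finally, for \eqref{eq:weak_conv_trace}, equation \eqref{eq:sum} shows $\partial_t(u_\sigma+v_\sigma)$ is bounded in $L^2(0,T;H^{-1}_{loc}(\R^d))+L^\infty((0,T)\times\R^d)$ (the flux $(u_\sigma+v_\sigma)\nabla W_\sigma$ being bounded in $L^2$ by \ref{estim:B} and the reaction terms being bounded by Assumption~\ref{ass_main}); hence for each $\psi\in C_c^\infty(\R^d)$ the maps $t\mapsto\int_{\R^d}(u_\sigma+v_\sigma)(t)\,\psi\,\diff x$ are bounded and equicontinuous on $[0,T]$, so converge uniformly (along a subsequence) to $t\mapsto\int_{\R^d}(u+v)(t)\,\psi\,\diff x$ by \eqref{strong_sum}; evaluating at $t=T$ and using the uniform bounds on $(u_\sigma+v_\sigma)(T)$ in $L^1(\R^d)\cap L^\infty(\R^d)$, hence in $L^\gamma(\R^d)$, gives the claimed weak convergence.

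I expect the only genuine obstacle to be the strong compactness of $\{p_\sigma\}$: only $\{W_\sigma\}$ carries a uniform spatial bound while only $\{p_\sigma\}$ carries a (weak) time bound, so one has to route the spatial compactness through $W_\sigma$ and transfer it back to $p_\sigma$ via the elliptic relation \eqref{eq:elliptic_PDE}, exploiting crucially that the defect $\|W_\sigma-p_\sigma\|_{L^2}=\|\sigma\Delta W_\sigma\|_{L^2}$ is small precisely in the regime $\sigma\to0$ of interest; some care is also needed since the $H^{-s}_{loc}$ time bound is too weak for Riesz--Kolmogorov directly and must be combined with spatial mollification.
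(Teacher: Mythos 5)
Your proposal is correct and follows essentially the same route as the paper: the weak limits and \eqref{weak_laplace} are immediate from the a priori bounds, strong $L^2$ compactness of $\{p_\sigma\}$ is obtained by transporting the uniform spatial translation bound on $W_\sigma$ through $p_\sigma=W_\sigma-\sigma\Delta W_\sigma$ and combining with the $H^{-s}_{loc}$ time bound in the Aubin--Lions--Simon style Lemma~\ref{lem:Jakub}/Lemma~\ref{lem:compact_pressure}, and the remaining convergences follow from interpolation and Vitali. Two small remarks on polish rather than substance: (i) the phrase ``asymptotically equicontinuous along $\sigma\to0$'' is not yet the uniform Riesz--Kolmogorov criterion; the clean way to close it, as the paper does, is to note that the strongly convergent sequence $\{\sigma\Delta W_\sigma\}$ is precompact in $L^2$ and therefore satisfies the translation criterion uniformly, so that $p_\sigma=W_\sigma-\sigma\Delta W_\sigma$ inherits it; (ii) for \eqref{eq:weak_conv_trace} you use Arzel\`a--Ascoli on $t\mapsto\int(u_\sigma+v_\sigma)\psi$ while the paper uses a time-mollification argument \`a la Boyer--Fabrie --- both are fine, but in your version you should say explicitly why the $T$-slices $(u_\sigma+v_\sigma)(T)$ are well defined and uniformly bounded in $L^\gamma$ (weak time-continuity with values in $L^\gamma$, from the $L^2_t H^{-1}_{loc}$ bound on $\partial_t(u_\sigma+v_\sigma)$ together with $L^\infty_t L^\gamma_x$), and why the uniform limit of the tested quantities at $t=T$ identifies $(u+v)(T)$ (one needs the limit equation to give the same weak continuity for $u+v$).
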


The crucial step in the proof of Proposition \ref{prop:convergence} is the strong compactness of the pressure $p_{\sigma}$ which will be achieved by the following lemma which in the spirit is a version of Lions-Aubin-Simon's argument \cite{MR916688}.
\begin{lem}
\label{lem:Jakub}
Suppose that for each compact set $K \subset \R^d$
\begin{equation}\label{eq:compactness_in_space}
\lim_{y \to 0} \int_0^T \int_{K} |p_{\sigma}(t,x+y)-p_{\sigma}(t,x)| \diff x \diff t = 0 \mbox{ uniformly in } \sigma \in (0,1).
\end{equation}
Moreover, assume that $\{\partial_t p_{\sigma}\}$ is bounded in $L^1(0,T; H^{-s}_{\text{loc}}(\R^d))$ for some $s>0$ and $\{p_{\sigma}\, |x|^2\}$ is uniformly bounded in $L^1((0,T)\times\R^d)$. Then, the sequence $\{p_{\sigma}\}$ is strongly compact in $L^1((0,T)\times\R^d)$.
\end{lem}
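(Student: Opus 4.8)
The plan is to run a Lions--Aubin--Simon type argument tailored to the weak topology $H^{-s}_{\text{loc}}$ in time and to the translation-compactness hypothesis \eqref{eq:compactness_in_space} in space, with the second-moment bound supplying tightness so that local compactness upgrades to compactness on all of $\R^d$. Concretely, fix $\eta > 0$. By the uniform bound on $\{p_\sigma |x|^2\}$ in $L^1((0,T)\times\R^d)$, there is a radius $R = R(\eta)$ such that $\int_0^T \int_{|x|>R} p_\sigma \diff x \diff t < \eta$ uniformly in $\sigma$, so it suffices to prove strong $L^1$-compactness of $\{p_\sigma\}$ on $(0,T)\times B_R$ for every fixed $R$. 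From now on I work on a fixed ball $K = B_R$ (or slightly larger to accommodate small translations).

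Next I would regularise in space: let $\rho_\delta$ be a standard mollifier on $\R^d$ and set $p_\sigma^\delta := p_\sigma \ast \rho_\delta$ (extending $p_\sigma$ by zero, say). The hypothesis \eqref{eq:compactness_in_space} gives $\sup_\sigma \|p_\sigma^\delta - p_\sigma\|_{L^1((0,T)\times K)} \to 0$ as $\delta \to 0$, so it is enough to show that for each fixed $\delta$ the family $\{p_\sigma^\delta\}_\sigma$ is strongly compact in $L^1((0,T)\times K)$. For fixed $\delta$, two facts hold: (i) $\{p_\sigma^\delta\}$ is bounded in $L^\infty(0,T; W^{k,\infty}(K))$ for every $k$ because $p_\sigma$ is bounded in $L^\infty((0,T)\times\R^d)$ (Proposition \ref{prop:unif_bounds}\ref{estim:A}) and differentiation falls on $\rho_\delta$; in particular $\{p_\sigma^\delta(t,\cdot)\}$ is equicontinuous and bounded on $K$, uniformly in $t$ and $\sigma$. (ii) $\partial_t p_\sigma^\delta = (\partial_t p_\sigma)\ast\rho_\delta$ is bounded in $L^1(0,T; L^\infty(K))$: indeed $\{\partial_t p_\sigma\}$ is bounded in $L^1(0,T; H^{-s}_{\text{loc}})$ and convolving against the fixed smooth kernel $\rho_\delta$ maps $H^{-s}_{\text{loc}}$ boundedly into $W^{k,\infty}_{\text{loc}}$ for any $k$; hence $\{p_\sigma^\delta\}$ is equicontinuous in time from $[0,T]$ into $C(K)$ with values in a bounded, equicontinuous — hence precompact by Arzelà--Ascoli — subset of $C(K)$. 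By the Arzelà--Ascoli theorem applied on $[0,T]$ with target the metric space $C(K)$, the family $\{p_\sigma^\delta\}$ is precompact in $C([0,T]; C(K))$, and a fortiori in $L^1((0,T)\times K)$.

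Finally I assemble the pieces. Given $\eta>0$, choose $R$ by tightness, then $\delta$ small so that $\sup_\sigma\|p_\sigma^\delta - p_\sigma\|_{L^1((0,T)\times B_R)} < \eta$, then extract (by the previous paragraph) a subsequence along which $p_\sigma^\delta$ converges in $L^1((0,T)\times B_R)$, hence is $\eta$-totally bounded; combining, $\{p_\sigma\}$ restricted to $(0,T)\times B_R$ is $3\eta$-totally bounded in $L^1$, and together with the tail bound the whole family $\{p_\sigma\}$ is totally bounded in $L^1((0,T)\times\R^d)$, i.e. strongly compact. (One runs the standard diagonal extraction over $\eta = 1/n$.) I expect the main obstacle — really the only non-routine point — to be the rigorous justification that convolution with $\rho_\delta$ turns the weak-in-time bound $\partial_t p_\sigma \in L^1(0,T; H^{-s}_{\text{loc}})$ into a genuine bound in $L^1(0,T; C(K))$, including the bookkeeping about the support growth under mollification (which is why one takes $K$ slightly larger than $B_R$); once that is in place, everything else is Arzelà--Ascoli plus the tightness from Proposition \ref{prop:unif_bounds}\ref{estim:E}.
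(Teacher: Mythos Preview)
Your overall strategy matches the paper's: mollify in space by $\rho_\delta$, use the spatial translation hypothesis~\eqref{eq:compactness_in_space} to show $\sup_\sigma \|p_\sigma \ast \rho_\delta - p_\sigma\|_{L^1((0,T)\times K)} \to 0$, use the $H^{-s}_{\text{loc}}$ bound on $\partial_t p_\sigma$ to control the mollified sequence in time, and finally invoke tightness from the second moment. The paper carries this out by estimating the time-translation quantity for Riesz--Kolmogorov--Fr\'echet directly, obtaining $\int_0^{T-h}\int_K |p_\sigma \ast \varphi_\delta(t+h,x) - p_\sigma \ast \varphi_\delta(t,x)|\,\diff x\,\diff t \leq C(K)\, h\, \delta^{-(s+d/2)}$ and then linking $\delta$ to $h$.

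There is, however, a genuine gap in your step (ii). From $\partial_t p_\sigma^\delta$ bounded in $L^1(0,T; L^\infty(K))$ you conclude that $\{p_\sigma^\delta\}$ is equicontinuous as a family of maps $[0,T] \to C(K)$, hence precompact in $C([0,T]; C(K))$ by Arzel\`a--Ascoli. This does not follow: an $L^1$-in-time bound on the derivative does not give uniform equicontinuity (consider $\partial_t p_\sigma^\delta(\tau,x) = \sigma^{-1}\mathds{1}_{[0,\sigma]}(\tau)$; then $\|\partial_t p_\sigma^\delta\|_{L^1_t} = 1$ yet $\|p_\sigma^\delta(h,\cdot)-p_\sigma^\delta(0,\cdot)\|_{C(K)}\not\to 0$ uniformly in $\sigma$). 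The fix is immediate, since you only need compactness in $L^1((0,T)\times K)$: the \emph{integrated} time-translation estimate
\[
\int_0^{T-h} \|p_\sigma^\delta(t+h,\cdot) - p_\sigma^\delta(t,\cdot)\|_{L^1(K)}\,\diff t \;\leq\; |K|\,h\,\|\partial_t p_\sigma^\delta\|_{L^1(0,T; C(K))} \;\leq\; C(\delta)\,h
\]
follows from Fubini and suffices; this is precisely the paper's computation. Alternatively, Simon's $r=1$ version of Aubin--Lions yields compactness in $L^p(0,T; C(K))$ for $p<\infty$ from your bounds (i) and (ii). A minor additional point: your step (i) invokes the $L^\infty$ bound on $p_\sigma$ from Proposition~\ref{prop:unif_bounds}\ref{estim:A}, which is not a hypothesis of the abstract lemma; the paper's argument avoids this by bounding $|\partial_t p_\sigma \ast \varphi_\delta(t,x)| \leq \|\partial_t p_\sigma(t,\cdot)\|_{H^{-s}_{\text{loc}}}\,\|\varphi_\delta\|_{H^s}$ via duality.
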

\begin{rem}
Several variants of this result are possible. For instance, one can have more general assumption on the time derivative or one can also formulate it in for the space $L^p((0,T)\times \R^d)$ space with $p > 1$. Another trivial generalization is that the tail estimate could be replaced with more general tightness assumption.
\end{rem}
\begin{proof}[Proof of Lemma \ref{lem:Jakub}]
In view of the Riesz-Kolmogorov-Frechet theorem, to establish local compactness it is sufficient to prove
$$
\lim_{h \to 0} \int_0^{T-h} \int_{K} |p_{\sigma}(t+h,x)-p_{\sigma}(t,x)| \diff x \diff t = 0 \mbox{ uniformly in } \sigma \in (0,1)
$$
for each compact set $K \subset \R^d$. Using a family of smooth, compactly supported mollifiers $\{\varphi_{\delta}\}$ with $\delta$ depending on $h$, we have 
\begin{align*}
    \int_0^{T-h} \int_{K} |p_{\sigma}(t+h,x)&-p_{\sigma}(t,x)| \diff x \diff t \leq \\
    \leq &\, \int_0^{T-h} \int_{K} |p_{\sigma}(t+h,x)-p_{\sigma}(t+h,x)\ast \varphi_{\delta}| \diff x \diff t \\
    &+  \int_0^{T-h} \int_{K} |p_{\sigma}(t,x)-p_{\sigma}(t,x)\ast \varphi_{\delta}| \diff x \diff t\\
    &+  \int_0^{T-h} \int_{K} |p_{\sigma} \ast \varphi_{\delta}(t+h,x)-p_{\sigma} \ast \varphi_{\delta}(t,x)| \diff x \diff t.
\end{align*}
The first two terms converge to 0 when $\delta \to 0$, independently of $h$, as a consequence of \eqref{eq:compactness_in_space}. Hence, we only need to study the third term. We write 
$$
p_{\sigma} \ast \varphi_{\delta}(t+h,x)-p_{\sigma} \ast \varphi_{\delta}(t,x) = h\,\int_0^1 \partial_t p_{\sigma} \ast \varphi_{\delta}(t+s\,h,x) \diff s.
$$
Therefore, the term of interest can be estimated by
\begin{multline*}
C\, h \int_0^{T-h} \int_{K} \left| \int_0^1 \partial_t p_{\sigma} \ast \varphi_{\delta}(t+s\,h,x) \diff s \right| \diff x \diff t \leq \\ \leq 
C\, h \int_0^1 \int_0^{T-h} \int_{K} \left|  \partial_t p_{\sigma} \ast \varphi_{\delta}(t+s\,h,x)  \right| \diff x \diff t \diff s \leq C\, h \| \partial_t p_{\sigma} \ast \varphi_{\delta} \|_{L^1((0,T)\times K)},
\end{multline*}
where we applied Fubini's theorem. It remains to estimate the convolution. We have
$$
\partial_t p_{\sigma} \ast \varphi_{\delta}(t,x) = \int_{\R^d} \partial_t p_{\sigma}(t,y) \, \varphi_{\delta}(x-y) \diff y \leq \| \partial_t p_{\sigma}(t,\cdot) \|_{H^{-s}_{loc}} \, \| \varphi_{\delta} \|_{H^{s}},
$$
Applying the $L^1((0,T)\times K)$ norm we obtain
$$
\|\partial_t p_{\sigma} \ast \varphi_{\delta}\|_{L^1((0,T)\times K)} \leq |K| \, \| \partial_t p_{\sigma} \|_{L^1_t H^{-s}_{loc,x}} \, \| \varphi_{\delta} \|_{H^{s}} \leq \frac{C(K)}{\delta^{s+d/2}}.
$$
Choosing $h = \delta^{1+s+d/2}$ we obtain compactness of $\{p_{\sigma}\}$ on $(0,T)\times K$ for each compact set $K$.\\

To obtain global compactness, we perform a usual argument which uses the decay estimate. Let $B_n$ be the sequence of balls $B(0,n)$. By the diagonal method, we construct a subsequence such that $p_{\sigma} \to p$ in $L^1((0,T)\times B_n)$ for each $n \in \N$. Then,
\begin{align*}
\|p_{\sigma} - p\|_{L^1((0,T)\times\R^d)} &\leq \|p_{\sigma} - p\|_{L^1((0,T)\times B_n)} + \|p_{\sigma} - p\|_{L^1((0,T)\times (\R^d\setminus B_n))} \\
&\leq \|p_{\sigma} - p\|_{L^1((0,T)\times B_n)} + \frac{C}{n^2},
\end{align*}
where $C = \sup_{\sigma\in(0,1)} \| p_{\sigma} \, |x|^2\|_{L^1((0,T)\times\R^d)}$ and we used, that by Fatou lemma, 
$$
\|p |x|^2\|_{L^1((0,T)\times\R^d)} \leq \liminf_{\sigma \to 0} \| p_{\sigma} \, |x|^2\|_{L^1((0,T)\times\R^d)}.
$$
Hence, $\limsup_{\sigma \to 0} \|p_{\sigma} - p\|_{L^1((0,T)\times\R^d)} \leq \frac{C}{n^2}$ for all $n$ which concludes the proof.
\end{proof}

By interpolation, we deduce:
\begin{lem}\label{lem:compact_pressure}
The sequence of pressures $\{p_\sigma\}$ is strongly compact in $L^{q}((0,T)\times\R^{d})$ for all $1\le q<+ \infty$.    
\end{lem}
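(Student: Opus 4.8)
The plan is to apply Lemma~\ref{lem:Jakub} to the sequence of pressures $\{p_\sigma\}$ to get strong compactness in $L^1$, and then to upgrade it to $L^q$ for every $1<q<\infty$ by interpolation with the uniform $L^\infty$ bound. Two of the three hypotheses of Lemma~\ref{lem:Jakub} are already available: the bound on $\{\partial_t p_\sigma\}$ in $L^1(0,T;H^{-s}_{\text{loc}}(\R^d))$ is Proposition~\ref{prop:unif_bounds}\ref{estim:D}, and the tightness of $\{p_\sigma\,|x|^2\}$ in $L^1((0,T)\times\R^d)$ is Proposition~\ref{prop:unif_bounds}\ref{estim:E}. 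So the only point that needs work is the space equicontinuity~\eqref{eq:compactness_in_space} of $\{p_\sigma\}$, and this is where the elliptic equation enters.

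To establish~\eqref{eq:compactness_in_space} I would use~\eqref{eq:elliptic_PDE} in the rearranged form $p_\sigma = W_\sigma - \sigma\,\Delta W_\sigma$ (valid a.e.), so that for a compact $K\subset\R^d$, with $K':=\{x\in\R^d:\dist(x,K)\le 1\}$ and $|y|\le 1$,
\begin{equation*}
\int_0^T\!\!\int_K |p_\sigma(t,x+y)-p_\sigma(t,x)|\,\diff x\diff t \le \int_0^T\!\!\int_K |W_\sigma(t,x+y)-W_\sigma(t,x)|\,\diff x\diff t + 2\int_0^T\!\!\int_{K'} \sigma\,|\Delta W_\sigma|\,\diff x\diff t .
\end{equation*}
The first term is controlled by writing $W_\sigma(t,x+y)-W_\sigma(t,x)=\int_0^1 y\cdot\nabla W_\sigma(t,x+sy)\,\diff s$ and applying Cauchy--Schwarz together with Proposition~\ref{prop:unif_bounds}\ref{estim:B}, which gives a bound $\le C(K)\,|y|$ uniform in $\sigma$. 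The second term is controlled by Cauchy--Schwarz and Proposition~\ref{prop:unif_bounds}\ref{estim:C} by $C(K)\sqrt{\sigma}\,\|\sqrt{\sigma}\,\Delta W_\sigma\|_{L^2((0,T)\times\R^d)}\le C(K)\sqrt{\sigma}$. Altogether $\int_0^T\!\int_K |p_\sigma(\cdot+y)-p_\sigma|\le C(K)(|y|+\sqrt\sigma)$, which yields~\eqref{eq:compactness_in_space} in the form needed in Lemma~\ref{lem:Jakub} (where the conclusion is extracted along $\sigma\to 0$, so the $\sqrt\sigma$ term is harmless).

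Lemma~\ref{lem:Jakub} then gives a (non-relabelled) subsequence along which $p_\sigma\to p$ strongly in $L^1((0,T)\times\R^d)$; by~\eqref{eq:uniform_bounds_existence_thm} and Proposition~\ref{prop:unif_bounds}\ref{estim:A} we have $0\le p_\sigma\le p_H$ and hence $0\le p\le p_H$ in the limit. For $1<q<\infty$ the elementary interpolation inequality $\|p_\sigma-p\|_{L^q}\le \|p_\sigma-p\|_{L^\infty}^{1-1/q}\,\|p_\sigma-p\|_{L^1}^{1/q}\le p_H^{1-1/q}\,\|p_\sigma-p\|_{L^1}^{1/q}$ shows $p_\sigma\to p$ in $L^q((0,T)\times\R^d)$, completing the proof.

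The step I expect to be the crux is~\eqref{eq:compactness_in_space}: this is precisely the new ingredient advertised in the abstract — transferring spatial compactness from $W_\sigma$ to $p_\sigma$ through the elliptic relation — and it rests on having simultaneously $\nabla W_\sigma$ bounded in $L^2$ and the viscous term $\sigma\Delta W_\sigma$ tending to zero strongly. Everything else amounts to feeding the a priori estimates of Proposition~\ref{prop:unif_bounds} into Lemma~\ref{lem:Jakub} and a one-line interpolation.
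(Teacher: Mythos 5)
Your argument mirrors the paper's proof: decompose $p_\sigma = W_\sigma - \sigma\Delta W_\sigma$, obtain spatial translate-equicontinuity from the $L^2$ bound on $\nabla W_\sigma$ and the smallness of $\sigma\Delta W_\sigma$, feed estimates~\ref{estim:D}, \ref{estim:E} and~\eqref{eq:compactness_in_space} into Lemma~\ref{lem:Jakub}, then interpolate with the uniform $L^\infty$ bound. The only divergence is in the viscous term: the paper notes $\sigma\Delta W_\sigma\to 0$ strongly in $L^2$ and invokes the converse Riesz--Kolmogorov--Fr\'echet theorem to get literally the uniform-in-$\sigma$ translate estimate, whereas you bound the translate difference explicitly by $C(K)\sqrt\sigma$. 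That bound does not verify the hypothesis of Lemma~\ref{lem:Jakub} as written, since it is not uniformly small over all of $\sigma\in(0,1)$; your parenthetical that the $\sqrt\sigma$ term ``is harmless'' is correct, because along any sequence $\sigma_n\to 0$ only finitely many indices exceed any threshold $\sigma_0$ and each individual $p_{\sigma_n}\in L^1$ is translate-continuous, so translate-equicontinuity of the sequence follows --- but this deserves a full sentence rather than a parenthesis. Apart from that (and the immaterial constant $p_H$ versus $2p_H$ in the final interpolation), the proof is correct and essentially the one in the paper.
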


It turns out that the sequence of pressures $\{p_\sigma\}$ satisfies~\eqref{eq:compactness_in_space}. 

\begin{proof}[Proof of~\eqref{eq:compactness_in_space}]
We recall the definition of the Brinkman law 
$$
p_{\sigma}= -\sigma \Delta W_{\sigma} + W_{\sigma}. 
$$

First note that $-\sigma\Delta W_{\sigma}$ converges strongly to 0 in $L^{2}((0,T)\times\R^{d})$ as a consequence of~\ref{estim:C}. In particular it is compact and by the converse of the Riesz-Kolmogorov-Frechet theorem we deduce 
\begin{equation*}
\lim_{y \to 0} \int_0^T \int_{K} |\sigma\Delta W_{\sigma}(t,x+y)-\sigma\Delta W_{\sigma}(t,x)| \diff x \diff t = 0 \mbox{ uniformly in } \sigma \in (0,1).
\end{equation*}
To conclude the proof of \eqref{eq:compactness_in_space} in remains to prove that 
\begin{equation*}
\lim_{y \to 0} \int_0^T \int_{K} | W_{\sigma}(t,x+y)- W_{\sigma}(t,x)| \diff x \diff t = 0 \mbox{ uniformly in } \sigma \in (0,1).
\end{equation*}
This is a simple consequence of the formula
$$
W_{\sigma}(t,x+y)- W_{\sigma}(t,x)=\int_{0}^{1}\nabla W_{\sigma}(t, x+ sy)\cdot y \diff s
$$
and the uniform bound \ref{estim:B} (note that we work on the compact set $K$ so that $L^2(K)$ embeds into $L^1(K)$).

\end{proof}

\begin{proof}[Proof of Proposition \ref{prop:convergence}]
Convergences~\eqref{weak_u}, \eqref{weak_v}, \eqref{weak_laplace} follow from Proposition~\ref{prop:unif_bounds} and nonnegativity of $u_{\sigma}$, $v_{\sigma}$. The strong convergence of the pressure~\eqref{strong_p} is a consequence of Lemma \ref{lem:compact_pressure}. Combining this convergence and~\eqref{weak_laplace} we deduce the strong convergence in $L^{2}$ of $W_{\sigma}$ and then in every $L^{q}$ (except $q=1, \infty$) by interpolation using~\ref{estim:A}. Using also the estimate~\ref{estim:B}, we deduce the weak convergence of $\nabla W_{\sigma}$ and conclude the proof of~\eqref{strong_W}.
The convergence~\eqref{strong_sum} is a consequence of~\eqref{strong_p}: indeed we can extract a subsequence of pressures that converge a.e. so that $u_{\sigma}+v_{\sigma}$ converges a.e. Due to the uniform $L^{\infty}$ bound \ref{estim:A} and tail estimate \ref{estim:E}, Vitali convergence theorem implies~\eqref{strong_sum}.\\

Finally, we prove \eqref{eq:weak_conv_trace}. We adapt the argument from \cite[Lemma II.5.9]{Boyer}. First, we prove that $(u_{\sigma}+v_{\sigma})(T)$ makes sense as an element of $L^{\gamma}(\R^d)$. Let $\eta_{\delta} = \frac{1}{\delta}\mathds{1}_{[-\delta,0]}$ and $f_{\delta}(t,x) = (u_{\sigma}+v_{\sigma}) \ast \eta_{\delta}$ where the convolution is a convolution in time. As $u_{\sigma}+v_{\sigma} \in L^{\infty}(0,T; L^{\gamma}(\R^d))$, $\|f_{\delta}(T,\cdot)\|_{L^{\gamma}(\R^d)} \leq C$ independently of $\delta>0$. Hence, up to a subsequence, $f_{\delta}(T) \weak f$ in $L^{\gamma}(\R^d)$ and it remains to prove $f = (u_{\sigma}+v_{\sigma})(T)$. Let $\psi \in C_c^{\infty}(\R^d)$. Since 
\begin{equation}\label{eq:functionals_sigma_fixed_T}
\int_{\R^d} f_{\delta}(T,x) \, \psi(x) \diff x = \left[\int_{\R^d} (u_{\sigma} + v_{\sigma})(\cdot,x)\, \psi(x) \diff x\right] \ast \eta_{\delta}(T)
\end{equation}
and the function $t \mapsto \int_{\R^d} (u_{\sigma}+v_{\sigma})\, \psi(x) \diff x$ is continuous (it can be easily seen that the sequence $\{\partial_t (u_{\sigma}+v_{\sigma})\}$ is uniformly bounded in $L^2(0,T; H^{-1}_{loc}(\R^d))$ and such regularity implies also continuity in $C(0,T; H^{-1}_{loc}(\R^d))$, see \cite[Lemma 7.1]{MR3014456}), the (RHS) of \eqref{eq:functionals_sigma_fixed_T} converges to $\int_{\R^d} (u_{\sigma}(T) + v_{\sigma}(T))\, \psi(x) \diff x$ so that $f=(u_{\sigma} + v_{\sigma})(T)$ a.e. on $\R^d$. Exactly the same argument shows that $(u+v)(T)$ makes sense because, thanks to convergences \eqref{weak_u}, \eqref{weak_v}, \eqref{strong_W}, \eqref{strong_sum}, we can pass to the limit $\sigma\to0$ and deduce the same weak formulation
\begin{equation}\label{eq:weak_form_the_limit_sum}
\begin{split}
\int_{0}^{T}\int_{\R^{d}} (u+v) \p_{t}\varphi \diff x\diff t  + \int_{\R^{d}}  (u^{0}+v^0)\, \varphi(0) \diff x - \int_{0}^{T}\int_{\R^{d}} (u+v)\,\nabla p\cdot\nabla\varphi\diff x \diff t = \\ =  \int_{0}^{T}\int_{\R^{d}} u\, F(p)\diff x\diff t +  \int_{0}^{T}\int_{\R^{d}} v\, G(p)\diff x\diff t,
\end{split}
\end{equation}
which implies continuity of $t \mapsto \int_{\R^d} (u+v)\, \psi(x) \diff x$. \\

\noindent The argument above shows that the sequence $
\{(u_{\sigma}+ v_{\sigma})(T)\}$ is bounded in $L^{\gamma}(\R^d)$ so it has a subsequence converging to some $s \in L^{\gamma}(\R^d)$ when $\sigma \to 0$. We claim that $s = (u+ v)(T)$. To this end, we consider the weak formulation from Definition \ref{def:Brinkman} with test function of the form $\psi(x)\, \eta_{\delta}(t)$ where $\eta_{\delta} = 1$ on $[0, T-2\delta]$, $\eta_{\delta} = 0$ on $[T-\delta, T]$ and it is linear interpolation on $[T-2\delta, T-\delta]$
(such function is admissible by density as it has Sobolev derivative). By weak continuity, as $\delta \to 0$,
\begin{multline*}
\int_0^T \int_{\R^d} (u_{\sigma}+v_{\sigma})(t,x) \, \psi(x) \partial_t \eta_{\delta}(t) \diff x \diff t = \frac{1}{\delta} \int_{T-2\delta}^{T-\delta} \int_{\R^d} (u_{\sigma} + v_{\sigma})(t,x)\,  \psi(x) \diff x \diff t  \\ 
\xrightarrow{\delta\to0} \int_{\R^d} (u_{\sigma} + v_{\sigma})(T,x)\,  \psi(x) \diff x \xrightarrow{\sigma \to 0} \int_{\R^d} s(x)\,\psi(x) \diff x. 
\end{multline*}
We can apply the same limiting procedure $\delta \to 0$ in the weak formulation \eqref{eq:weak_form_the_limit_sum}. By comparing the results, we deduce that $s = (u + v)(T)$ a.e. on $\R^d$ and this concludes the proof.
\end{proof}

\section{Strong convergence of $\nabla W_{\sigma}$ and conclusion}
\begin{proof}[Proof of Theorem \ref{thm:brinkman_to_darcy}]
Due to the weak convergence of $u_{\sigma}$ and $v_{\sigma}$, cf. \eqref{weak_u}--\eqref{weak_v}, to pass to the limit in \eqref{eq:Brinkman}, it is sufficient to prove strong convergence of $\nabla W_\sigma$. As $\nabla W_{\sigma} \weak \nabla p$, it is sufficient to prove convergence of $L^2$ norms, i.e.
$$
\| \nabla W_{\sigma} \|^2_{L^{2}((0,T)\times\R^{d})} \to \| \nabla p \|^2_{L^{2}((0,T)\times\R^{d})}.  
$$
By the properties of the weak convergence
\begin{equation}\label{liminf_estimate}
\left\|\nabla p\right\|^2_{L^{2}((0,T)\times\R^{d})}\le \liminf_{\sigma\to 0}\left\|\nabla W_{\sigma}\right\|^2_{L^{2}((0,T)\times\R^{d})}.
\end{equation}
so we only need to estimate $\limsup_{\sigma \to 0}$. The idea is to pass to the limit in Equation~\eqref{eq:sum}. First, due to the strong compactness of $u+v$ and $p$ in Proposition~\ref{prop:convergence}, we can pass to the limit in Equation~\eqref{eq:sum} and obtain (in the weak sense)
\begin{equation}\label{eq:sum2}
\p_{t}(u+v) - \DIV((u+v)\nabla p ) = u \, F(p) + v\, G(p).  
\end{equation}

We can test this equation with $\gamma (u+v)^{\gamma-1}$ (see Remark \ref{rem:rigorous_integration_by_parts_main_result} below for the precise argument) and we obtain after integrating in time
\begin{equation}\label{eq:mass_pressure_limit}
\begin{split}
\int_{\R^d} p(T,x) \diff x +  ( \gamma-1) \int_0^T \int_{\R^d}  |\nabla p|^2\diff x \diff t &= \\ = \int_{\R^d} p^0(x) \diff x +  \gamma \int_0^T \int_{\R^d}& (u\,F(p) + v\,G(p)) (u+v)^{\gamma-1} \diff x \diff t.
\end{split} 
\end{equation}
Note that \eqref{eq:weak_conv_trace} implies
\begin{equation}\label{eq:liminf_for_the_trace}
\int_{\R^d} p(T,x) \diff x \leq \liminf_{\sigma \to 0} \int_{\R^d} p_{\sigma}(T,x) \diff x
\end{equation}
by the weak lower semicontinuity of the norm. Integrating \eqref{eq:mass_pressure} in time from $[0,T]$ and applying $\limsup_{\sigma\to0}$ we see that 
\begin{equation}\label{eq:mass_pressure_limit2}
\begin{split}
( \gamma-1)\limsup_{\sigma\to 0} \int_0^T \int_{\R^d}  |\nabla W_\sigma|^2\diff x \diff t \le 
\int_{\R^d} p^0(x) \diff x &\,+ \\
+\, \gamma \int_0^T \int_{\R^d} (u\,F(p) + v\,G(p)) (u+v)^{\gamma-1} \diff x \diff t\, - &\liminf_{\sigma \to 0} \int_{\R^d} p_{\sigma}(T,x) \diff x.
\end{split}
\end{equation}
Combining~\eqref{eq:mass_pressure_limit}, \eqref{eq:liminf_for_the_trace} and~\eqref{eq:mass_pressure_limit2} we see that 
\begin{equation}\label{limsup_estimate}
\limsup_{\sigma\to 0}\left\|\nabla W_{\sigma}\right\|^2_{L^{2}((0,T)\times\R^{d})}\le \left\|\nabla p\right\|^2_{L^{2}((0,T)\times\R^{d})}.
\end{equation}

which together with~\eqref{liminf_estimate} concludes the proof.
\end{proof}
\begin{rem}\label{rem:rigorous_integration_by_parts_main_result} To make integration by parts rigorous, we test equation with $$
    \gamma (u+v)^{\gamma-1}\, \psi_R(x)
    $$ 
    where $\psi_R$ is a smooth function such that $\psi_R(x) = 1$ for $|x|\leq R$, $\psi_R(x) = 0$ for $|x| \geq R+1$ and $|\psi_R'|\leq 1$. Then, the integration by parts is justified and we obtain
    $$
    (\gamma-1)\, 
    \int_0^T \int_{\R^d} |\nabla p|^2 \, \psi_R(x) \diff x \diff t + \gamma\, \int_0^T \int_{R\leq |x|\leq R+1} p\, \nabla p\, \psi' \diff x \diff t .
    $$
    The first term converges to $(\gamma-1)\, \int_{\R^d} |\nabla p|^2 \, \psi_R(x) \diff x$ by the dominated convergence theorem. For the second term, we note that $p\,\nabla p \in L^1((0,T)\times\R^d)$ (in fact, we have even better) so this term converges to 0 again by the dominated convergence theorem. 
\end{rem}
\begin{rem}
Similarly, to make testing \eqref{eq:sum2} with $\gamma\,(u+v)^{\gamma-1}$ rigorous, one mollifies \eqref{eq:sum2} both in time and space with $\eta_{\delta}(t)\, \psi_{\eps}(x)$. Then, one tests \eqref{eq:sum2} with $\gamma\,((u+v)\ast \eta_{\delta} \ast  \psi_{\eps})^{\gamma-1}$ so that the usual chain rule in Sobolev spaces can be applied resulting in the term of the form $\int_{\R^d} ((u+v)\ast \eta_{\delta} \ast  \psi_{\eps})^{\gamma}(T, x)\diff x$. Then, one sends $\delta \to 0$ using weak continuity of the sum $(u+v)$ as in the proof of Proposition \ref{prop:convergence} and then $\eps \to 0$ using the properties of the mollifiers.
\end{rem}

\appendix
\section{Proof of Theorem \ref{thm_existence_of_solutions} (existence result)}\label{sect:existence_solutions}
Here, we prove existence of solutions to the system \eqref{eq:Brinkman} by introducing an artificial diffusion as in \cite{david2023degenerate} for the case $\gamma =1$. We also recall the uniform (in terms of $\gamma$ and $\sigma$) $L^{\infty}$ bounds from \cite{MR3037041}.\\

We rewrite \eqref{eq:Brinkman} as follows
\begin{equation*}
\partial_t u -  
\nabla u \cdot  \nabla W - \frac{1}{\sigma} \, u\, (W-p) =  u\,F(p), \quad 
\partial_t v -  
\nabla v \cdot  \nabla W - \frac{1}{\sigma} \, v\, (W-p) =  v\,G(p),
\end{equation*}
where we skipped the lower index $\sigma$ as $\sigma$ is fixed. It is useful to write $W = K_{\sigma} \ast p$ where $K_{\sigma}$ is a fundamental solution $-\sigma \Delta K_{\sigma} + K_{\sigma} = \delta_0$ and $p = (u+v)^{\gamma}$. We regularize the problem in three ways. First, we introduce diffusion. Second, we mollify $K_{\sigma}$ with a usual mollifier $\omega_{\delta}$ (i.e. $\omega_{\delta}(x) = \frac{1}{\delta^d} \omega(x/\delta)$ where $\omega$ is smooth, supported in the unit ball and of mass 1). Third, we truncate all nonlinearities by the truncation operator $Q(p) = p\, \mathds{1}_{p \leq 2 p_H} + 2\,p_{H}\, \mathds{1}_{p > 2p_{H}}$. The resulting system reads
\begin{equation}\label{eq:approximating_solution}
    \begin{split}
\partial_t u - \varepsilon\Delta u- 
\nabla u \cdot  \nabla W_{\delta} &= \frac{1}{\sigma} \, u\, (W_{\delta}-Q(p)) +  u\,F(Q(p)),\\
\partial_t v - \varepsilon\Delta v- 
\nabla v \cdot  \nabla W_{\delta} &= \frac{1}{\sigma} \, v\, (W_{\delta}-Q(p)) +  v\,G(Q(p)),\\
W_{\delta} &= K_{\sigma}\ast \omega_{\delta}\ast Q(p),
    \end{split}
\end{equation}
where $p = (u+v)^{\gamma}$. By properties of convolutions, $\nabla W_{\delta} = K_{\sigma}\ast \nabla \omega_{\delta}\ast Q(p)$ so that \eqref{eq:approximating_solution} can be considered as a semilinear parabolic system with Lipschitz nonlinearities which are well-understood \cite{ladyzhenskaya1968linear,MR1465184}. In particular, the solutions can be constructed by the fixed point argument combined with Schauder's estimates. We conclude that \eqref{eq:approximating_solution} has a nonnegative solution $(u,v)$.\\

Now, following \cite[Lemma 2.1]{MR3037041}, we claim that $p \leq p_H$. To this end, we sum up equations for $u$, $v$ and multiply by $\gamma\, (u+v)^{\gamma-1}$ to obtain
\begin{equation}\label{eq:equation_for_pressure_approx}
\begin{split}
\partial_t p - \varepsilon \, \gamma\,  \Delta (u+v) \, &(u+v)^{\gamma-1}  - \nabla p \cdot \nabla W_{\delta} = \\ &= \frac{\gamma}{\sigma} p\,(W_{\delta} - Q(p)) + \gamma\, (u+v)^{\gamma-1}\, (u\, F(Q(p)) + v\,G(Q(p))).
\end{split}
\end{equation}
Since at $t =0$, $p \leq p_H$, if the estimate is not satisfied, by continuity there is time $t>0$ where $p(t,\cdot)$ reaches its maximum with value in $(p_H, 2\, p_H)$. Let $x^*$ be a point where this happens. At this point, $\nabla p = 0$, $\Delta (u+v) \leq 0$ (because the function $u+v = p^{1/\gamma}$ reaches its maximum). Furthermore, the source term is strictly negative due to \ref{ass:item1} in Assumption \ref{ass_main}. Finally, we note that by Young's convolutional inequality
\begin{equation*}
W_{\delta}(t,x^*) - p(t,x^*) \leq \|p(t,\cdot)\|_{\infty} \, \|K_{\sigma}\|_{1}\, \|\omega_{\delta}\|_{1} - \|p(t,\cdot)\|_{\infty} = 0
\end{equation*}
so that the term $W_{\delta} - p$ at $x^*$ is nonpositive. We conclude that
$$
\partial_t p(t, x^*)<0
$$
so that $p$ cannot become greater than $p_H$. We conclude that $Q(p) = p$ in \eqref{eq:approximating_solution}.\\

Now, we send $\delta \to 0$. We write $u_{\delta}$ and $v_{\delta}$ for solutions to \eqref{eq:approximating_solution}, $p_{\delta} = (u_{\delta}+v_{\delta})^{\gamma}$ for the pressure and $W_{\delta} = K_{\sigma} \ast \omega_{\delta} \ast p_{\delta}$. First, thanks to the presence of diffusion and $\nabla K_{\sigma} \in L^1(\R^d)$, the sequences $\{u_{\delta}\}$ and $\{v_{\delta}\}$ are locally compact in $L^q((0,T)\times\R^d)$ for all $q <\infty$ by usual Lions-Aubin lemma and interpolation in Lebesgue spaces. As a consequence, it is easy to pass to the limit in the source terms. The nontrivial part is to pass to the limit in the advection term. To identify the limit we write
$$
\int_0^T \int_{\R^d} \nabla u_{\delta} \cdot \nabla W_{\delta} \, \varphi =
- \int_0^T \int_{\R^d} u_{\delta} \, \Delta W_{\delta}  \, \varphi + u_{\delta} \, \nabla W_{\delta}\cdot \nabla \varphi.  
$$
Hence, it is sufficient to prove that $\Delta W_{\delta}$ and $\nabla W_{\delta}$ converge at least weakly to the appropriate limits. However, these sequences are bounded in $L^{\infty}((0,T)\times\R^d)$ because $\Delta W_{\delta} = \frac{1}{\sigma}(p_{\delta} - p_{\delta}\ast K_{\sigma})\ast\omega_{\delta}$ and $\nabla W_{\delta} = \nabla K_{\sigma} \ast p_{\delta} \ast \omega_{\delta}$. Therefore, up to a subsequence, they have weak$^*$ limits which equals $\Delta W$ and $\nabla W$ with $W = K_{\sigma} \ast p$ due to the strong compactness of $u_{\delta}$ and $v_{\delta}$.\\

In the limit $\delta \to 0$, we obtain the system 
\begin{equation}\label{eq:approximation_last_step}
\partial_t u - \varepsilon\Delta u- 
\DIV(u \,  \nabla W) =   u\,F(p), \qquad 
\partial_t v - \varepsilon\Delta v- 
\DIV(v \,  \nabla W) =   v\,G(p),
\end{equation}    
where $W = K_{\sigma} \ast p$ and it remains to remove the diffusion, i.e. send $\varepsilon \to 0$. Again, we write $u_{\eps}$ and $v_{\eps}$ for solutions to \eqref{eq:approximation_last_step}, $p_{\eps} = (u_{\eps}+v_{\eps})^{\gamma}$ for the pressure and $W_{\eps} = K_{\sigma}\ast p_{\eps}$. Clearly, $u_{\eps} \weaks u$ and $v_{\eps} \weaks v$ in $L^{\infty}((0,T)\times\R^d)$. Moreover, standard computations show that $\{\partial_t u_{\eps}\}$, $\{\partial_t v_{\eps}\}$ are uniformly bounded in $L^2(0,T; H^{-1}(\R^d))$. We prove that both sequences $\{u_{\eps}\}$ and $\{v_{\eps}\}$ are strongly compact in space so that by Lemma \ref{lem:Jakub}, we deduce strong compactness. The same will follow for the pressure $p_{\eps}$ so that $\nabla W_{\eps} = \nabla K_{\sigma} \ast p_{\eps}$ converges in $L^1((0,T)\times\R^d)$ to $\nabla K_{\sigma} \ast p$ and so, by interpolation, in $L^q((0,T)\times\R^d)$ for all $q \in [1,2)$. This is sufficient to pass to the limit in \eqref{eq:approximation_last_step}.\\

The proof of compactness in space follows the method of Jabin and Belgacem \cite{MR2995703} (the only difference is that we deal with an additional source term). Let us recall that \cite{MR2995703} deals with compactness for the conservative equations
$$
\partial_t u_{\eps} - \varepsilon \Delta u_{\eps} - \DIV(u_{\eps}\,a_{\eps}) = 0,
$$
where $a_{\eps}$ is the vector field satisfying the following:
\begin{enumerate}
    \item $\sup_{\varepsilon \in (0,1)} \| \DIV a_{\eps} \|_{L^{\infty}((0,T)\times\R^d)} < \infty$,
    \item $\sup_{\varepsilon \in (0,1)}  \|a_{\eps}\|_{L^{\infty}(0,T; W^{1,p}(\R^d))} < \infty$ for some $p > 1$, 
    \item $\DIV a_{\eps} = d_{\eps} + r_{\eps}$ where $d_{\eps}$ is compact in space while $r_{\eps}$ is such that $|r_{\eps}(x) - r_{\eps}(y)| \leq C\, |u_{\eps}(x) - u_{\eps}(y)|$.
\end{enumerate}
In our case, $a_{\eps} = \nabla W_{\eps}$ satisfies (1) and (2). Indeed, $\DIV \nabla W_{\eps} = \Delta W_{\eps} = \frac{1}{\sigma} \left( p_{\eps} - K_{\sigma} \ast p_{\eps}\right) $ is uniformly bounded. Furthemore, it is easy to see, for instance from \eqref{eq:equation_for_pressure_approx}, that $\{p_{\eps}\}$ is uniformly bounded in $L^{\infty}(0,T; L^1(\R^d))$ so that it is bounded in $L^{\infty}(0,T; L^2(\R^d))$. Hence, $\{\Delta W_{\eps}\}$ is bounded in $L^{\infty}(0,T; L^2(\R^d))$ which easily implies that $\{W_{\eps}\}$ is bounded in $L^{\infty}(0,T; W^{2,2}(\R^d))$ so that (2) holds true with $p=2$.\\

Concerning (3), it is satisfied in a weaker sense. We have $d_{\eps} = W_{\eps}$ (it is compact in space by the estimate on $\{\nabla W_{\eps}\}$) and $r_{\eps} = p_{\eps}$ which satisfies (by the uniform boundedness of $\{u_{\eps}\}$ and $\{v_{\eps}\}$)
\begin{equation}\label{eq:Lipschitz_estimate_r_our_case}
|r_{\eps}(t,x) - r_{\eps}(t,y)| \leq C\, |u_{\eps}(t,x) - u_{\eps}(t,y)| + C\, |v_{\eps}(t,x) - v_{\eps}(t,y)|,
\end{equation}
so that the estimate depends on both species. Below, we briefly explain a simple modification of argument in \cite{MR2995703} to cover the case of \eqref{eq:Lipschitz_estimate_r_our_case} as well as how to include the source terms. \\

The compactness in \cite{MR2995703} is established by analysis of the quantity
$$
\mathcal{Q}_{u_{\eps}}(t) =  \int_{\R^d} \int_{\R^d} \mathcal{K}_{h}(x-y)|u_{\eps}(t,x)-u_{\eps}(t,y)|\diff x \diff y, 
$$
where $\mathcal{K}_h$ is a smooth, nonnegative kernel, supported in the ball $B_2(0)$ such that $\mathcal{K}_h(x) = \frac{1}{{(|x|^2 + h^2)}^{d/2}}$ on $B_1(0)$. Similarly, we define $\mathcal{Q}_{v_{\eps}}(t)$. It can be proved, cf. \cite[Lemma 3.1]{MR2995703}, that the sequence $\{u_{\eps}\}$ is locally compact in space in $L^1((0,T)\times \R^d)$ if
\begin{equation}\label{eq:criterion_cmpactness}
\lim_{h\to 0} \limsup_{\eps \to 0} \frac{1}{|\log h|}  \int_0^T \mathcal{Q}_{u_{\eps}}(t) \diff t = 0.
\end{equation}
Then, one computes $\frac{\diff}{\diff t} \mathcal{Q}_{u_{\eps}}$ using the PDE on $u_{\eps}$. Applying \cite[proof of Theorem 1.2]{MR2995703} with new assumption \eqref{eq:Lipschitz_estimate_r_our_case} and additional source term we deduce
\begin{equation}\label{eq:compactness_u_eps_operator_Q}
\begin{split}
\mathcal{Q}_{u_{\eps}}(t) \leq \, &\mathcal{Q}_{u_{\eps}}(0) + C\, \int_0^t \left(\mathcal{Q}_{u_{\eps}}(s) + \mathcal{Q}_{v_{\eps}}(s)\right) \diff s + C\, \frac{\varepsilon}{h^2} \\&+ 
\int_0^t \int_{\R^d} \int_{\R^d} \mathcal{K}_h(x-y)|d_{\eps}(t,x) - d_{\eps}(t,y)|\diff x \diff y \diff s
\\
& + \int_0^t \int_{\R^d} \int_{\R^d} \mathcal{K}_h(x-y) |u_{\eps}(x) F(p_{\eps}(x)) - u_{\eps}(y) F(p_{\eps}(y))| \diff x \diff y \diff s.
\end{split}
\end{equation}
The last integral can be bounded by $C\, \int_0^t \left(\mathcal{Q}_{u_{\eps}}(s) + \mathcal{Q}_{v_{\eps}}(s)\right) \diff s$. Now, to deduce compactness, we write the same expression as \eqref{eq:compactness_u_eps_operator_Q} for $\mathcal{Q}_{v_{\eps}}(t)$ and we sum up to deduce 
\begin{align*}
\mathcal{Q}_{u_{\eps}}(t) + \mathcal{Q}_{v_{\eps}}(t)\leq \, &\mathcal{Q}_{u_{\eps}}(0) + \mathcal{Q}_{v_{\eps}}(0)+ C\, \int_0^t \left(\mathcal{Q}_{u_{\eps}}(s) + \mathcal{Q}_{v_{\eps}}(s)\right) \diff s + C\, \frac{\varepsilon}{h^2} \\&+ 
\int_0^t \int_{\R^d} \int_{\R^d} \mathcal{K}_h(x-y)|d_{\eps}(t,x) - d_{\eps}(t,y)|\diff x \diff y \diff s.
\end{align*}
Now, it is easy to see that since $\{\nabla d_{\eps}\}$ is uniformly bounded in $L^1((0,T)\times\R^d)$, the last term is bounded by a constant. Hence, Gronwall's inequality and \eqref{eq:criterion_cmpactness} imply compactness in space of $\{u_{\eps}\}$ and $\{v_{\eps}\}$. 
\bibliographystyle{abbrv}
\bibliography{fastlimit}

\begin{thebibliography}{10}

\bibitem{MR2995703}
F.~B. Belgacem and P.-E. Jabin.
\newblock Compactness for nonlinear continuity equations.
\newblock {\em J. Funct. Anal.}, 264(1):139--168, 2013.

\bibitem{MR2952633}
M.~Bertsch, D.~Hilhorst, H.~Izuhara, and M.~Mimura.
\newblock A nonlinear parabolic-hyperbolic system for contact inhibition of
  cell-growth.
\newblock {\em Differ. Equ. Appl.}, 4(1):137--157, 2012.

\bibitem{Boyer}
F.~Boyer and P.~Fabrie.
\newblock {\em Mathematical tools for the study of the incompressible
  {N}avier-{S}tokes equations and related models}, volume 183 of {\em Applied
  Mathematical Sciences}.
\newblock Springer, New York, 2013.

\bibitem{MR3974475}
D.~Bresch, {\v{S}}.~Ne\v{c}asov\'{a}, and C.~Perrin.
\newblock Compression effects in heterogeneous media.
\newblock {\em J. \'{E}c. polytech. Math.}, 6:433--467, 2019.

\bibitem{MR3258257}
D.~Bresch, C.~Perrin, and E.~Zatorska.
\newblock Singular limit of a {N}avier-{S}tokes system leading to a
  free/congested zones two-phase model.
\newblock {\em C. R. Math. Acad. Sci. Paris}, 352(9):685--690, 2014.

\bibitem{MR4072681}
F.~Bubba, B.~Perthame, C.~Pouchol, and M.~Schmidtchen.
\newblock Hele-{S}haw limit for a system of two reaction-(cross-)diffusion
  equations for living tissues.
\newblock {\em Arch. Ration. Mech. Anal.}, 236(2):735--766, 2020.

\bibitem{burger2022porous}
M.~Burger and A.~Esposito.
\newblock Porous medium equation and cross-diffusion systems as limit of
  nonlocal interaction.
\newblock {\em arXiv preprint arXiv:2202.05030}, 2022.

\bibitem{byrne1996growth}
H.~M. Byrne and M.~Chaplain.
\newblock Growth of necrotic tumors in the presence and absence of inhibitors.
\newblock {\em Mathematical biosciences}, 135(2):187--216, 1996.

\bibitem{MR3913840}
J.~A. Carrillo, K.~Craig, and F.~S. Patacchini.
\newblock A blob method for diffusion.
\newblock {\em Calc. Var. Partial Differential Equations}, 58(2):Paper No. 53,
  53, 2019.

\bibitem{carrillo2023degenerate}
J.~A. Carrillo, C.~Elbar, and J.~Skrzeczkowski.
\newblock Degenerate {C}ahn-{H}illiard systems: From nonlocal to local.
\newblock {\em arXiv preprint arXiv:2303.11929}, 2023.

\bibitem{carrillo2023nonlocal}
J.~A. Carrillo, A.~Esposito, and J.~S.-H. Wu.
\newblock Nonlocal approximation of nonlinear diffusion equations.
\newblock {\em arXiv preprint arXiv:2302.08248}, 2023.

\bibitem{coclite2023ole}
G.~M. Coclite, M.~Colombo, G.~Crippa, N.~De~Nitti, A.~Keimer, E.~Marconi,
  L.~Pflug, and L.~V. Spinolo.
\newblock Oleinik-type estimates for nonlocal conservation laws and
  applications to the nonlocal-to-local limit.
\newblock {\em arXiv preprint arXiv:2304.01309}, 2023.

\bibitem{david-phenotypic}
N.~David.
\newblock Phenotypic heterogeneity in a model of tumour growth: existence of
  solutions and incompressible limit.
\newblock {\em To appear in Comm. Partial Differential Equations}, 2023.

\bibitem{david2023degenerate}
N.~David, T.~D{\k{e}}biec, M.~Mandal, and M.~Schmidtchen.
\newblock A degenerate cross-diffusion system as the inviscid limit of a
  nonlocal tissue growth model.
\newblock {\em arXiv preprint arXiv:2303.10620}, 2023.

\bibitem{david2022convergence}
N.~David, T.~D{\k{e}}biec, and B.~Perthame.
\newblock Convergence rate for the incompressible limit of nonlinear
  diffusion--advection equations.
\newblock {\em Annales de l'Institut Henri Poincar{\'e} C}, 2022.

\bibitem{MR4324293}
N.~David and B.~Perthame.
\newblock Free boundary limit of a tumor growth model with nutrient.
\newblock {\em J. Math. Pures Appl. (9)}, 155:62--82, 2021.

\bibitem{david2021incompressible}
N.~David and M.~Schmidtchen.
\newblock On the incompressible limit for a tumour growth model incorporating
  convective effects.
\newblock {\em arXiv preprint arXiv:2103.02564, to appear in Comm. Pure Appl.
  Math.}, 2021.

\bibitem{MR4093616}
E.~Davoli, H.~Ranetbauer, L.~Scarpa, and L.~Trussardi.
\newblock Degenerate nonlocal {C}ahn-{H}illiard equations: well-posedness,
  regularity and local asymptotics.
\newblock {\em Ann. Inst. H. Poincar\'{e} C Anal. Non Lin\'{e}aire},
  37(3):627--651, 2020.

\bibitem{MR4198717}
E.~Davoli, L.~Scarpa, and L.~Trussardi.
\newblock Nonlocal-to-local convergence of {C}ahn-{H}illiard equations:
  {N}eumann boundary conditions and viscosity terms.
\newblock {\em Arch. Ration. Mech. Anal.}, 239(1):117--149, 2021.

\bibitem{degond2022multi}
P.~Degond, S.~Hecht, M.~Romanos, and A.~Trescases.
\newblock Multi-species viscous models for tissue growth: incompressible limit
  and qualitative behaviour.
\newblock {\em J. Math. Biol.}, 85:16, 2022.

\bibitem{MR4063893}
P.~Degond, S.~Hecht, and N.~Vauchelet.
\newblock Incompressible limit of a continuum model of tissue growth for two
  cell populations.
\newblock {\em Netw. Heterog. Media}, 15(1):57--85, 2020.

\bibitem{DEBIEC2021204}
T.~Dębiec, B.~Perthame, M.~Schmidtchen, and N.~Vauchelet.
\newblock Incompressible limit for a two-species model with coupling through
  brinkman's law in any dimension.
\newblock {\em J. Math. Pures Appl.}, 145:204--239, 2021.

\bibitem{MR4146915}
T.~Dębiec and M.~Schmidtchen.
\newblock Incompressible limit for a two-species tumour model with coupling
  through {B}rinkman's law in one dimension.
\newblock {\em Acta Appl. Math.}, 169:593--611, 2020.

\bibitem{elbar2023nonlocal}
C.~Elbar, B.~Perthame, A.~Poiatti, and J.~Skrzeczkowski.
\newblock Nonlocal cahn-hilliard equation with degenerate mobility:
  Incompressible limit and convergence to stationary states.
\newblock {\em arXiv preprint arXiv:2305.06239}, 2023.

\bibitem{elbar2021degenerate}
C.~Elbar, B.~Perthame, and A.~Poulain.
\newblock Degenerate {C}ahn-{H}illiard and incompressible limit of a
  {K}eller-{S}egel model.
\newblock {\em Commun. Math. Sci.}, 20(7):1901--1926, 2022.

\bibitem{elbar-perthame-skrzeczkowski}
C.~Elbar, B.~Perthame, and J.~Skrzeczkowski.
\newblock Pressure jump and radial stationary solutions of the degenerate
  {C}ahn-{H}illiard equation.
\newblock {\em arXiv preprint arXiv:2206.07451, to appear in Comptes Rendus
  Mécanique}, 2022.

\bibitem{elbar2023limit}
C.~Elbar, B.~Perthame, and J.~Skrzeczkowski.
\newblock On the limit problem arising in the kinetic derivation of the
  {C}ahn-{H}illiard equation.
\newblock {\em arXiv preprint arXiv:2306.06486}, 2023.

\bibitem{elbar-skrzeczkowski}
C.~Elbar and J.~Skrzeczkowski.
\newblock Degenerate {C}ahn-{H}illiard equation: {F}rom nonlocal to local.
\newblock {\em J. Differential Equations}, 364:576--611, 2023.

\bibitem{MR4367913}
N.~Guillen, I.~Kim, and A.~Mellet.
\newblock A {H}ele-{S}haw limit without monotonicity.
\newblock {\em Arch. Ration. Mech. Anal.}, 243(2):829--868, 2022.

\bibitem{MR4000848}
P.~Gwiazda, B.~Perthame, and A.~\'{S}wierczewska Gwiazda.
\newblock A two-species hyperbolic-parabolic model of tissue growth.
\newblock {\em Comm. Partial Differential Equations}, 44(12):1605--1618, 2019.

\bibitem{he2022incompressible}
Q.~He, H.-L. Li, and B.~Perthame.
\newblock Incompressible limits of {P}atlak-{K}eller-{S}egel model and its
  stationary state.
\newblock {\em arXiv preprint arXiv:2203.13709}, 2022.

\bibitem{Hecht2023porous}
S.~Hecht, M.~Doumic, B.~Perthame, and D.~Peurichard.
\newblock Multispecies cross-diffusions: from a nonlocal mean-field to a porous
  medium system without self-diffusion.
\newblock {\em HAL preprint hal-04108050}, May 2023.

\bibitem{MR3717913}
S.~Hecht and N.~Vauchelet.
\newblock Incompressible limit of a mechanical model for tissue growth with
  non-overlapping constraint.
\newblock {\em Commun. Math. Sci.}, 15(7):1913--1932, 2017.

\bibitem{jacobs2021existence}
M.~Jacobs.
\newblock Existence of solutions to reaction cross diffusion systems.
\newblock {\em arXiv preprint arXiv:2107.12412}, 2021.

\bibitem{MR4215195}
M.~Jacobs, I.~Kim, and J.~Tong.
\newblock Darcy's law with a source term.
\newblock {\em Arch. Ration. Mech. Anal.}, 239(3):1349--1393, 2021.

\bibitem{MR4587548}
M.~Jacobs, I.~Kim, and J.~Tong.
\newblock Tumor growth with nutrients: {R}egularity and stability.
\newblock {\em Comm. Amer. Math. Soc.}, 3:166--208, 2023.

\bibitem{MR3795211}
I.~Kim and A.~R. M\'{e}sz\'{a}ros.
\newblock On nonlinear cross-diffusion systems: an optimal transport approach.
\newblock {\em Calc. Var. Partial Differential Equations}, 57(3):Paper No. 79,
  40, 2018.

\bibitem{MR3729490}
I.~Kim and N.~Po\v{z}\'{a}r.
\newblock Porous medium equation to {H}ele-{S}haw flow with general initial
  density.
\newblock {\em Trans. Amer. Math. Soc.}, 370(2):873--909, 2018.

\bibitem{MR3942711}
I.~Kim, N.~Po\v{z}\'{a}r, and B.~Woodhouse.
\newblock Singular limit of the porous medium equation with a drift.
\newblock {\em Adv. Math.}, 349:682--732, 2019.

\bibitem{MR3813966}
I.~Kim and O.~Turanova.
\newblock Uniform convergence for the incompressible limit of a tumor growth
  model.
\newblock {\em Ann. Inst. H. Poincar\'{e} C Anal. Non Lin\'{e}aire},
  35(5):1321--1354, 2018.

\bibitem{MR3485146}
I.~C. Kim, B.~Perthame, and P.~E. Souganidis.
\newblock Free boundary problems for tumor growth: a viscosity solutions
  approach.
\newblock {\em Nonlinear Anal.}, 138:207--228, 2016.

\bibitem{MR3119735}
S.~Labb\'{e} and E.~Maitre.
\newblock A free boundary model for {K}orteweg fluids as a limit of barotropic
  compressible {N}avier-{S}tokes equations.
\newblock {\em Methods Appl. Anal.}, 20(2):165--177, 2013.

\bibitem{ladyzhenskaya1968linear}
O.~Ladyzhenskaya, V.~Solonnikov, and N.~Ural’tseva.
\newblock Linear and quasilinear equations of parabolic type, transl. math.
\newblock {\em Monographs, Amer. Math. Soc}, 23, 1968.

\bibitem{MR1465184}
G.~M. Lieberman.
\newblock {\em Second order parabolic differential equations}.
\newblock World Scientific Publishing Co., Inc., River Edge, NJ, 1996.

\bibitem{MR1821479}
P.-L. Lions and S.~Mas-Gallic.
\newblock Une m\'{e}thode particulaire d\'{e}terministe pour des \'{e}quations
  diffusives non lin\'{e}aires.
\newblock {\em C. R. Acad. Sci. Paris S\'{e}r. I Math.}, 332(4):369--376, 2001.

\bibitem{MR1687274}
P.-L. Lions and N.~Masmoudi.
\newblock On a free boundary barotropic model.
\newblock {\em Ann. Inst. H. Poincar\'{e} C Anal. Non Lin\'{e}aire},
  16(3):373--410, 1999.

\bibitem{Liuguo}
J.-G. Liu and X.~Xu.
\newblock Existence and incompressible limit of a tissue growth model with
  autophagy.
\newblock {\em SIAM J. Math. Anal.}, 53(5):5215--5242, 2021.

\bibitem{MR3579573}
T.~Lorenzi, A.~Lorz, and B.~Perthame.
\newblock On interfaces between cell populations with different mobilities.
\newblock {\em Kinet. Relat. Models}, 10(1):299--311, 2017.

\bibitem{MR3695889}
A.~Mellet, B.~Perthame, and F.~Quir\'{o}s.
\newblock A {H}ele-{S}haw problem for tumor growth.
\newblock {\em J. Funct. Anal.}, 273(10):3061--3093, 2017.

\bibitem{MR3355504}
C.~Perrin and E.~Zatorska.
\newblock Free/congested two-phase model from weak solutions to
  multi-dimensional compressible {N}avier-{S}tokes equations.
\newblock {\em Comm. Partial Differential Equations}, 40(8):1558--1589, 2015.

\bibitem{MR3292119}
B.~Perthame, F.~Quir\'{o}s, M.~Tang, and N.~Vauchelet.
\newblock Derivation of a {H}ele-{S}haw type system from a cell model with
  active motion.
\newblock {\em Interfaces Free Bound.}, 16(4):489--508, 2014.

\bibitem{MR3162474}
B.~Perthame, F.~Quir\'{o}s, and J.~L. V\'{a}zquez.
\newblock The {H}ele-{S}haw asymptotics for mechanical models of tumor growth.
\newblock {\em Arch. Ration. Mech. Anal.}, 212(1):93--127, 2014.

\bibitem{MR3260280}
B.~Perthame, M.~Tang, and N.~Vauchelet.
\newblock Traveling wave solution of the {H}ele-{S}haw model of tumor growth
  with nutrient.
\newblock {\em Math. Models Methods Appl. Sci.}, 24(13):2601--2626, 2014.

\bibitem{Perthame-incompressible-visco}
B.~Perthame and N.~Vauchelet.
\newblock Incompressible limit of a mechanical model of tumour growth with
  viscosity.
\newblock {\em Philos. Trans. Roy. Soc. A}, 373(2050):20140283, 16, 2015.

\bibitem{MR4179253}
B.~C. Price and X.~Xu.
\newblock Global existence theorem for a model governing the motion of two cell
  populations.
\newblock {\em Kinet. Relat. Models}, 13(6):1175--1191, 2020.

\bibitem{MR3014456}
T.~Roub\'{\i}\v{c}ek.
\newblock {\em Nonlinear partial differential equations with applications},
  volume 153 of {\em International Series of Numerical Mathematics}.
\newblock Birkh\"{a}user/Springer Basel AG, Basel, second edition, 2013.

\bibitem{MR916688}
J.~Simon.
\newblock Compact sets in the space {$L^p(0,T;B)$}.
\newblock {\em Ann. Mat. Pura Appl. (4)}, 146:65--96, 1987.

\bibitem{MR3037041}
M.~Tang, N.~Vauchelet, I.~Cheddadi, I.~Vignon-Clementel, D.~Drasdo, and
  B.~Perthame.
\newblock Composite waves for a cell population system modeling tumor growth
  and invasion.
\newblock {\em Chinese Ann. Math. Ser. B}, 34(2):295--318, 2013.

\bibitem{MR3695967}
N.~Vauchelet and E.~Zatorska.
\newblock Incompressible limit of the {N}avier-{S}tokes model with a growth
  term.
\newblock {\em Nonlinear Anal.}, 163:34--59, 2017.

\end{thebibliography}

\end{document}